\documentclass[11pt,oneside]{amsart}
\PassOptionsToPackage{nameinlink,capitalise}{cleveref}
\usepackage[T1]{fontenc}
\usepackage[latin9]{inputenc}
\usepackage{color}
\usepackage{mathrsfs}
\usepackage{mathtools}
\usepackage{url}
\usepackage{amsbsy}
\usepackage{amstext}
\usepackage{amsthm}
\usepackage{amssymb}
\usepackage{geometry}
\usepackage[bookmarks=true,bookmarksnumbered=false,bookmarksopen=false,
 breaklinks=false,pdfborder={0 0 1},backref=false,colorlinks=true]
 {hyperref}
\hypersetup{
 allcolors={[rgb]{0,0.2,0.6}}}

\makeatletter
\numberwithin{equation}{section}
\numberwithin{figure}{section}

\usepackage{shuffle}
\usepackage{crossreftools}

\AddToHook{begindocument/end}{%
  \crefname{thm}{Theorem}{Theorems}%
  \Crefname{thm}{Theorem}{Theorems}%

  \crefname{lem}{Lemma}{Lemmas}%
  \Crefname{lem}{Lemma}{Lemmas}%

  \crefname{prop}{Proposition}{Propositions}%
  \Crefname{prop}{Proposition}{Propositions}%

  \crefname{cor}{Corollary}{Corollaries}%
  \Crefname{cor}{Corollary}{Corollaries}%

  \crefname{defn}{Definition}{Definitions}%
  \Crefname{defn}{Definition}{Definitions}%
}

\makeatother

\theoremstyle{plain}
\newtheorem{thm}{\protect\theoremname}
\newtheorem{conjecture}[thm]{\protect\conjecturename}
\newtheorem{cor}[thm]{\protect\corollaryname}
\newtheorem{prop}[thm]{\protect\propositionname}
\theoremstyle{definition}
\newtheorem{defn}[thm]{\protect\definitionname}
\theoremstyle{plain}
\newtheorem{lem}[thm]{\protect\lemmaname}

\usepackage{cleveref}
\AddToHook{env/conjecture/begin}{\crefalias{thm}{conjecture}}
\AddToHook{env/cor/begin}{\crefalias{thm}{cor}}
\AddToHook{env/prop/begin}{\crefalias{thm}{prop}}
\AddToHook{env/defn/begin}{\crefalias{thm}{defn}}
\AddToHook{env/lem/begin}{\crefalias{thm}{lem}}
\crefname{conjecture}{conjecture}{conjectures}
\Crefname{conjecture}{Conjecture}{Conjectures}
\crefname{cor}{corollary}{corollaries}
\Crefname{cor}{Corollary}{Corollaries}
\crefname{defn}{definition}{definitions}
\Crefname{defn}{Definition}{Definitions}
\crefname{lem}{lemma}{lemmas}
\Crefname{lem}{Lemma}{Lemmas}
\crefname{prop}{proposition}{propositions}
\Crefname{prop}{Proposition}{Propositions}
\crefname{thm}{theorem}{theorems}
\Crefname{thm}{Theorem}{Theorems}
\providecommand{\conjecturename}{Conjecture}
\providecommand{\corollaryname}{Corollary}
\providecommand{\definitionname}{Definition}
\providecommand{\lemmaname}{Lemma}
\providecommand{\propositionname}{Proposition}
\providecommand{\theoremname}{Theorem}

\begin{document}
\address[Minoru Hirose]{Graduate School of Science and Engineering, Kagoshima University, 1-21-35 Korimoto, Kagoshima, Kagoshima 890-0065, Japan}
\email{hirose@sci.kagoshima-u.ac.jp}
\subjclass[2020]{Primary 11M32}
\keywords{Multiple zeta values, confluence relations,
Hoffman basis, motivic Galois groups,
Grothendieck--Teichm{\"u}ller group,
Deligne--Drinfeld conjecture}
\title[Confluence relations, Hoffman basis over $\mathbb{Z}_{(2)}$, and
the DD conjecture]{Confluence relations, Hoffman basis over $\mathbb{Z}_{(2)}$, and
the Deligne--Drinfeld conjecture}
\author{Minoru Hirose}
\begin{abstract}
We express every admissible word as a $\mathbb{Z}_{(2)}$-linear combination
of Hoffman words and explicit confluence relations. This implies that
every multiple zeta value is a linear combination of elements of the
Hoffman basis with coefficients having odd denominators, as conjectured
by Hirose and Sato. Combined with Brown's theorem and Furusho's characterization
of the Grothendieck--Teichmüller group $\mathrm{GRT}$ by confluence
relations, our result identifies the de Rham motivic Galois group
of mixed Tate motives over $\mathbb{Z}$ with $\mathrm{GRT}$ and
gives an alternative proof of the Deligne--Drinfeld conjecture, for
which a proof was previously presented in the OpenAI Math Release.
\end{abstract}

\maketitle
\global\long\def\Z{\mathbb{Z}}%
 
\global\long\def\Ibb{\mathbb{I}}%
 
\global\long\def\R{\mathbb{R}}%
  
\global\long\def\A{\mathcal{A}}%
 
\global\long\def\Wcal{\mathcal{W}}%
 
\global\long\def\Zcal{\mathcal{Z}}%
 
\global\long\def\Mcal{\mathcal{M}}%
\global\long\def\Hcal{\mathcal{H}}%
 
\global\long\def\sh{\shuffle}%
 
\global\long\def\id{\mathrm{id}}%
 
\global\long\def\Const{\operatorname{Const}}%
  
\global\long\def\odd{\operatorname{o}}%
 
\global\long\def\depth{\operatorname{depth}}%
   
\global\long\def\eps{\varepsilon}%
 
\global\long\def\ind#1{\boldsymbol{#1}}%
 
\global\long\def\br#1{\mathbf{1}_{\{#1\}}}%
 
\global\long\def\word{\mathsf{w}}%
 
\global\long\def\kk{\ind k}%
 
\global\long\def\bb{\ind b}%
 
\global\long\def\avec{\ind a}%

\tableofcontents{}

\section{Introduction\label{sec:introduction}}

An index $\kk=(k_{1},\ldots,k_{d})$ of positive integers is called
admissible if $d=0$ or $k_{d}\ge2$. Its multiple zeta value (MZV)
is defined by
\[
\zeta(k_{1},\ldots,k_{d})\coloneqq\sum_{0<n_{1}<\cdots<n_{d}}\frac{1}{n^{k_{1}}_{1}\cdots n^{k_{d}}_{d}},\qquad\zeta(\varnothing)\coloneqq1.
\]
Its weight is $|\kk|\coloneqq k_{1}+\cdots+k_{d}$. An index whose
entries all belong to $\{2,3\}$ is called a Hoffman index. For $N\ge0$,
put
\begin{align*}
\Ibb_{N} & \coloneqq\{\kk:\kk\text{ is admissible},\ |\kk|=N\},\\
\Ibb^{\mathrm{H}}_{N} & \coloneqq\{\kk\in\Ibb_{N}:\kk\text{ is a Hoffman index}\}.
\end{align*}
In particular, $\Ibb_{0}=\Ibb^{\mathrm{H}}_{0}=\{\varnothing\}$.

Let $\Zcal_{N}$ be the $\mathbb{Q}$-vector space spanned by MZVs
of weight $N$, and put $d_{N}\coloneqq\#\Ibb^{\mathrm{H}}_{N}$.
The following theorems were proved using the theory of mixed Tate
motives.
\begin{thm}[{Deligne--Goncharov \cite[Corollary 5.25]{DeligneGoncharov}; Terasoma
\cite{TerasomaMT}}]
For every $N\ge0$, we have
\[
\dim_{\mathbb{Q}}\Zcal_{N}\le d_{N}.
\]
\end{thm}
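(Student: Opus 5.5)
The plan is to factor the real period map through the ring of periods of the Tannakian category $\mathrm{MT}(\Z)$ of mixed Tate motives over $\Z$, and then bound the dimension of that ring degree by degree. First, following Deligne--Goncharov, realize every MZV as a period of $\mathrm{MT}(\Z)$. For an admissible index $\kk$ of weight $N$, $\zeta(\kk)$ is an iterated integral on $\mathbb{P}^{1}\setminus\{0,1,\infty\}$ from the tangential base point $\vec{1}_{0}$ to $-\vec{1}_{1}$. The motivic fundamental torsor of paths ${}_{0}\Pi_{1}$ is an ind-object of $\mathrm{MT}(\Z)$. Its unramifiedness at every prime comes from the good reduction of $\mathbb{P}^{1}\setminus\{0,1,\infty\}$ together with the tangential base points. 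Hence each $\zeta(\kk)$ is the image under the period map of an element $\zeta^{\mathfrak{m}}(\kk)$ of the algebra $\mathcal{P}^{\mathfrak{m}}$ of motivic periods of $\mathrm{MT}(\Z)$. Moreover, weight $N$ MZVs land in the piece of $\mathcal{P}^{\mathfrak{m}}$ of weight $N$, graded via the weight filtration, which splits canonically for mixed Tate motives.

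Second, describe the structure of $\mathrm{MT}(\Z)$ via its Ext groups. By Borel's computation of $K_{2n-1}(\Z)\otimes\mathbb{Q}$, we have $\mathrm{Ext}^{1}(\mathbb{Q}(0),\mathbb{Q}(n))=\mathbb{Q}$ for odd $n\ge3$ and $0$ otherwise, and $\mathrm{Ext}^{2}=0$. Consequently the motivic Galois group is $\mathbb{G}_{m}\ltimes U$. Here $U$ is prounipotent and its Lie algebra is free on one generator $\sigma_{2n+1}$ in each degree $-(2n+1)$, $n\ge1$. The de Rham periods form the ring $\mathcal{O}(U)$, which is the graded dual of the universal enveloping algebra. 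That dual is the shuffle algebra on letters $f_{3},f_{5},\dots$, and its degree-$N$ piece has dimension equal to the number of words in odd letters $\ge3$ of total weight $N$. Adding the real period $\zeta^{\mathfrak{m}}(2)$, which accounts for the $\mathbb{G}_{m}$ and the de Rham/Betti comparison, the weight-$N$ part of the relevant period algebra $\mathcal{H}$ satisfies
\[
\sum_{N}\dim\mathcal{H}_{N}\,t^{N}\le\frac{1}{(1-t^{2})\,(1-\sum_{n\ge1}t^{2n+1})}=\frac{1}{1-t^{2}-t^{3}}.
\]

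Finally, compare with $d_{N}$. Hoffman indices of weight $N$ are compositions of $N$ into parts $2$ and $3$, so $d_{N}=d_{N-2}+d_{N-3}$ with $d_{0}=1$, $d_{1}=0$, and hence $\sum d_{N}t^{N}=1/(1-t^{2}-t^{3})$. Since $\Zcal_{N}$ is the image under the period map of a subspace of $\mathcal{H}_{N}$, we get $\dim\Zcal_{N}\le\dim\mathcal{H}_{N}\le d_{N}$. Terasoma's route replaces $\mathrm{MT}(\Z)$ by the realization of the relevant motives as relative cohomology of moduli spaces. It then reaches the same bound via the Ext computation.

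The main obstacle is the first step: proving that the motivic fundamental torsor with tangential base points lies in $\mathrm{MT}(\Z)$ and not merely in $\mathrm{MT}(\mathbb{Q})$. This requires checking unramifiedness at all primes, including $2$, where the naive divisor $\{0,1,\infty\}$ degenerates. I would handle it as Deligne--Goncharov do, by their criterion for unramifiedness via good reduction of the pair (scheme, divisor with normal crossings) and specialization along tangent vectors. Borel's theorem is the other deep input; I would take it as given.
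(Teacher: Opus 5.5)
The paper does not prove this statement; it quotes it from Deligne--Goncharov and Terasoma. Your outline is the standard Deligne--Goncharov argument, written in Brown's language of motivic periods. The Ext computation, the freeness of $\mathrm{Lie}\,U$, the shuffle-algebra count and the identity $\sum_N d_N t^N=1/(1-t^2-t^3)$ are all fine.

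However, the obstacle you single out does not exist, and as stated it would undercut your own plan. The divisor $\{0,1,\infty\}\subset\mathbb{P}^1_{\Z}$ does not degenerate modulo $2$ or modulo any prime. Its three components are pairwise disjoint sections over $\operatorname{Spec}\Z$; modulo $2$ they are the three distinct points of $\mathbb{P}^1(\mathbb{F}_2)$. The tangent vectors $\vec{1}_0$ and $-\vec{1}_1$ are integral of unit length, so they reduce to nonzero tangent vectors at every $p$. That is exactly why the good-reduction criterion applies uniformly at all primes and puts ${}_0\Pi_1$ in $\mathrm{MT}(\Z)$.

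If the divisor really degenerated at $2$, the criterion would fail there and you would only land in $\mathrm{MT}(\Z[1/2])$. There, the extra class in $\mathrm{Ext}^1(\mathbb{Q}(0),\mathbb{Q}(1))$ coming from $2\in\Z[1/2]^{\times}$ (period $\log 2$) turns your generating function into $1/(1-t-t^2)$, far weaker than $d_N$. The degeneration you have in mind occurs for $\{0,\pm1,\infty\}$ (Euler sums), where $1\equiv-1\pmod 2$.

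The second point is that ``adding the real period $\zeta^{\mathfrak m}(2)$'' is where the dimension count actually happens, and you only assert it. The full ring of motivic periods of $\mathrm{MT}(\Z)$ is noncanonically $\mathcal{O}(U)\otimes\mathbb{Q}[(\mathbb{L}^{\mathfrak m})^{\pm1}]$, whose weight-graded pieces are infinite-dimensional. You need two properties of motivic MZVs:
\begin{itemize}
\item They are effective, since $\mathcal{O}({}_0\Pi_1)$ has only weights $\ge 0$. This rules out negative powers of $\mathbb{L}^{\mathfrak m}$.
\item They are invariant under real Frobenius, since the straight path $\mathrm{dch}$ is fixed by complex conjugation. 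This rules out odd powers of $\mathbb{L}^{\mathfrak m}$.
\end{itemize}
Without the second property you would only get $\mathcal{O}(U)\otimes\mathbb{Q}[\mathbb{L}^{\mathfrak m}]$, with generating function $(1+t)/(1-t^2-t^3)$, i.e.\ the bound $d_N+d_{N-1}$. With both, the relevant algebra is (noncanonically) $\mathcal{O}(U)\otimes\mathbb{Q}[\zeta^{\mathfrak m}(2)]$, and your inequality follows.
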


\begin{thm}[{Brown \cite[Theorem 1.1]{BrownMT}}]
Every MZV is a $\mathbb{Q}$-linear combination of MZVs with Hoffman
indices of the same weight.
\end{thm}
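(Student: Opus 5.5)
The plan is to follow Brown's strategy: lift the statement to motivic multiple zeta values and argue with the coaction. We work in the algebra $\Hcal$ of motivic MZVs, the $\mathcal{O}(U^{\mathrm{dR}})$-comodule attached to the motivic fundamental groupoid of $\mathbb{P}^1\setminus\{0,1,\infty\}$. It is graded by weight and comes with a period map $\mathrm{per}:\Hcal\to\R$ sending $\zeta^{\mathfrak{m}}(\kk)\mapsto\zeta(\kk)$. It therefore suffices to prove the following: the elements $\zeta^{\mathfrak{m}}(\kk)$ with $\kk\in\Ibb^{\mathrm{H}}_{N}$ are linearly independent in $\Hcal_{N}$, and their number $d_{N}$ is at least $\dim\Hcal_{N}$. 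The upper bound $\dim\Hcal_{N}\le d_{N}$ is the motivic form of the Deligne--Goncharov/Terasoma theorem stated above. Independence then forces them to span $\Hcal_{N}$, and applying $\mathrm{per}$ gives the claim.

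For independence, we use Goncharov's formula for the coaction $\Delta$, in Brown's form. We pass to the infinitesimal coactions $D_{2r+1}:\Hcal_{N}\to\mathcal{L}_{2r+1}\otimes\Hcal_{N-2r-1}$, where $\mathcal{L}$ is the Lie coalgebra of indecomposables. Since $\ker\bigoplus_{r\ge1}D_{2r+1}$ in weight $N$ is $\mathbb{Q}\zeta^{\mathfrak{m}}(N)$, which vanishes for $N$ odd, independence can be proved by induction on weight. We filter Hoffman elements by the number $\ell$ of $3$'s, which is the level. The key step is to compute $D_{2r+1}$ of a Hoffman element modulo elements of lower level. The only subsequences of the word that contribute leading terms are those cutting out a block of the form $2^{a}32^{b}$. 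This gives
\[
D_{2r+1}\zeta^{\mathfrak{m}}(2^{a_0}32^{a_1}\cdots32^{a_\ell})\equiv\sum_{i}\xi_{r}\otimes\zeta^{\mathfrak{m}}(\ldots 2^{a_{i-1}+a_i-?}\ldots)\pmod{\text{lower level}},
\]
where $\xi_r$ is the class of $\zeta^{\mathfrak{m}}(2r+1)$. The coefficients are explicit combinations of the numbers $\zeta^{\mathfrak{m}}(2^{a}32^{b})\bmod\zeta^{\mathfrak{m}}(2)$. By Zagier's evaluation of $\zeta(2^{a}32^{b})$, lifted motivically, these numbers are rational multiples of $\zeta^{\mathfrak{m}}(2r+1)$, with coefficients $c_{a,b}$ given by binomial expressions.

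The resulting map on associated graded pieces, from level $\ell$ to level $\ell-1$ tensor weight $2r+1$, is described by a matrix of the coefficients $c_{a,b}$. The main obstacle is to show that this matrix is invertible. Following Brown, the first attempt would be to reduce it modulo $2$: the $c_{a,b}$ are $2$-adically integral, and modulo $2$ the matrix becomes triangular with unit diagonal, in a suitable ordering of the $a_i$ splits. This arithmetic property of Zagier's coefficients is the delicate part. Once it holds, the associated graded maps are injective, so $\bigoplus_r D_{2r+1}$ is injective on the span of Hoffman elements of positive level. Combining this with the induction hypothesis in lower weight gives independence. At level $0$ the element $\zeta^{\mathfrak{m}}(2^n)$ is a nonzero rational multiple of $\zeta^{\mathfrak{m}}(2)^n$, so it is nonzero and handled separately. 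This completes the argument.
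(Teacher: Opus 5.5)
You have the right architecture: it is the skeleton of Brown's proof (motivic lift, $\dim\mathcal{H}_N\le d_N$, infinitesimal coaction, level filtration, motivic Zagier formula). The paper itself only cites this theorem. However, the two steps that carry the actual content are missing.

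\textbf{The level-lowering formula is not derived.} Your display contains a literal placeholder ``$?$'', and the coefficients are only called ``explicit combinations''. The claim that, modulo lower level, only cuts of shape $2^{a}32^{b}$ contribute is asserted without the case analysis of subsequences it requires (boundary letters, left factors that are not Hoffman words or contain several $3$'s).

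\textbf{The invertibility step is postponed, and the reason you give for it is false.} You defer it explicitly (``Once it holds\dots''), and Zagier's coefficients are \emph{not} $2$-adically integral. Already in weight $5$, in the paper's convention, $\zeta(2,3)=3\zeta(2)\zeta(3)-\tfrac{11}{2}\zeta(5)$ and $\zeta(3,2)=\tfrac{9}{2}\zeta(5)-2\zeta(2)\zeta(3)$, and these identities hold motivically. In general, the factor $1-2^{-2r}$ in Zagier's formula produces a term $\pm 2^{1-2r}\binom{2r}{2b+1}$. Its $2$-adic valuation is at most $2-2r+\log_{2}r$, hence negative for $r\ge 2$, and it dominates the remaining part of $c_{a,b}$, which is an even integer. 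So ``reduce the matrix modulo $2$'' has no meaning as stated.

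\textbf{What is needed to close the gap.} You must:
\begin{enumerate}
\item write down the exact entries of the map from level $\ell$ to $\bigoplus_r \mathbb{Q}\xi_r\otimes(\text{level }\ell-1)$;
\item make the matrix square through a bijection such as $2^{a_0}3\cdots32^{a_\ell}\leftrightarrow(r,\,2^{a_0}3\cdots32^{a_{\ell-1}})$ with $r=a_\ell+1$;
\item rescale each $r$-column by the appropriate power of $2$;
\item prove, by a parity computation on the rescaled dominant terms, that the result is triangular with odd diagonal in some ordering.
\end{enumerate}
This is precisely the non-formal core of Brown's theorem. Without it, nothing shows that $\bigoplus_r D_{2r+1}$ is injective on positive level, so neither independence nor spanning follows.

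A minor slip: $\zeta^{\mathfrak{m}}(N)\neq 0$ for odd $N$. The correct statement is that $\ker D_{<N}$ on $\mathcal{H}_N$ is $\mathbb{Q}\zeta^{\mathfrak{m}}(N)$, and for odd $N$ this line is detected by $D_N$.

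\textbf{Comparison with the paper.} The statement as quoted (spanning for real MZVs) also follows, even with $\mathbb{Z}_{(2)}$ coefficients, from the paper's Theorem~\ref{thm:main} via the evaluation map. There the mod-$2$ unitriangularity concerns an \emph{integer} matrix of explicit confluence relations $\operatorname{cf}'(R(\boldsymbol{k}))$, ordered by the number of odd entries. No rescaling, no dimension bound and no independence statement are needed. Your motivic route, once completed, would give the stronger conclusion that the motivic Hoffman elements form a basis.
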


Write $\Z_{(2)}$ for the ring of rational numbers with odd denominators.
\begin{conjecture}[{Hirose--Sato \cite[Remark 15]{HSblock}}]
\label{conj:hirose-sato}Every MZV is a $\Z_{(2)}$-linear combination
of MZVs with Hoffman indices of the same weight.
\end{conjecture}

In this paper, we give a direct algebraic proof of this conjecture
using an explicit family of confluence relations, without relying
on motivic methods. In fact, we prove the corresponding generation
statement modulo confluence relations.

Related arithmetic reductions are known for suitably normalized Euler
sums \cite[Theorem 3.10]{HSEuler} and for expansions with integer
coefficients in MZVs whose entries are all at least $2$ \cite[Theorems 1.7 and 1.8]{HMSWdrop1}.

\subsection{Main results\label{sec:statement}}

Let $\A\coloneqq\Z\langle e_{0},e_{1}\rangle$ be the noncommutative
polynomial ring, and put 
\[
\A^{0}\coloneqq\Z\oplus e_{1}\A e_{0}.
\]
Unless otherwise stated, multiplication of words means concatenation.
The empty word is denoted by $1$. For a word $w$, write $|w|$ for
its length. The correspondence between admissible indices and words
is 
\[
\word(\kk)\coloneqq e_{1}e^{k_{1}-1}_{0}\cdots e_{1}e^{k_{d}-1}_{0},\qquad\word(\varnothing)\coloneqq1.
\]

Let $\mathcal{I}_{\mathrm{CF}}\subset\A^{0}$ be the $\Z$-submodule
generated by the confluence relations \cite[Definition 22 and Remark 23]{HSconfluence},
recalled in \cref{sec:confluence}. Define 
\[
\Zcal^{\mathrm{cf}}\coloneqq(\A^{0}/\mathcal{I}_{\mathrm{CF}})\otimes\Z_{(2)},\qquad L:\A^{0}\longrightarrow\Zcal^{\mathrm{cf}},\quad L(w)\coloneqq[w].
\]
For an admissible index $\kk=(k_{1},\ldots,k_{d})$, set 
\begin{equation}
\zeta^{\mathrm{cf}}(\kk)\coloneqq(-1)^{d}L(\word(\kk)).\label{eq:Lmzv}
\end{equation}
Then the confluence relations \cite[Theorem 21]{HSconfluence} give
a $\Z_{(2)}$-linear evaluation map 
\begin{equation}
\Zcal^{\mathrm{cf}}\longrightarrow\R,\qquad\zeta^{\mathrm{cf}}(\kk)\longmapsto\zeta(\kk).\label{eq:numerical-evaluation}
\end{equation}

\begin{thm}
\label{thm:main}Let $\kk\in\Ibb_{N}$. Then $\zeta^{\mathrm{cf}}(\kk)$
is a $\Z_{(2)}$-linear combination of $\zeta^{\mathrm{cf}}(\bb)$
with $\bb\in\Ibb^{\mathrm{H}}_{N}$.
\end{thm}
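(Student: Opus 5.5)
We argue by induction on the weight $N$, and for fixed $N$ by a secondary induction on a suitable ordering of admissible words. The natural ordering is a lexicographic order that measures how far a word is from being Hoffman: first the number of letters $e_1$ immediately followed by $e_1$ (occurrences of entries $k_i=1$), then the number of entries $k_i\ge4$, with ties broken by depth. Words with no entries $1$ and no entries $\ge4$ are exactly the words $\word(\bb)$ with $\bb$ Hoffman, so these form the base case.

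For the inductive step we need an explicit family of confluence relations from \cite{HSconfluence}, recalled in \cref{sec:confluence}. For each non-Hoffman admissible word $w$, such a relation should express
\[
c\,L(w)=\sum_{v<w}c_{v}L(v)+(\text{products or lower-weight terms}),
\]
where $c$ is odd and each $v$ is smaller in the chosen order. Since we work in $\A^{0}$ modulo $\mathcal{I}_{\mathrm{CF}}$ at fixed weight, the relations are homogeneous, and only words of weight $N$ appear. Concretely, there are two kinds of steps.

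\emph{Lowering entries $\ge4$.} We would use the relations coming from a local substitution of $e_0^{3}$ by combinations of $e_0e_1$-patterns, in the spirit of Hoffman-type and derivation relations obtained as special cases of confluence. This should rewrite $k\ge4$ in terms of indices with entries $2,3$ and smaller residual entries, with leading coefficient $\pm1$ up to powers of $2$ appearing in the denominators of other terms, not of the leading one.

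\emph{Removing entries $1$.} We would use duality and the shuffle-regularized relations contained in $\mathcal{I}_{\mathrm{CF}}$ to trade $e_1e_1$ for $e_1e_0$-patterns.

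The crux is arithmetic. Brown's proof inverts a matrix whose determinant has 2-adic valuation zero only after a careful choice of basis, and here we must exhibit the triangularity directly. For each of the two kinds of steps, I would compute the leading term of the chosen confluence relation modulo $2$, that is, in $\A^{0}\otimes\mathbb{F}_2$, and verify that it equals $w$ plus strictly smaller words. Then the coefficient $c$ is odd, so it is invertible in $\Z_{(2)}$, and the induction closes.

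I expect the main obstacle to be finding, for \emph{every} non-Hoffman word, a confluence relation whose reduction mod $2$ is unitriangular with respect to a single fixed order. Plausibly the order must be refined to something like Brown's level filtration, namely the number of $3$'s, combined with the 2-adic behaviour of the coefficients $\binom{2r}{2r-2}$-type terms appearing in the depth-graded relations. My first attempt would be to choose the family of confluence relations so that its mod-$2$ reduction becomes a simple combinatorial identity (for example, $e_0^{2}\equiv$ a symmetrized product) whose triangularity can be checked by hand.
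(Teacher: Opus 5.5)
\textbf{There is a genuine gap: the proposal never constructs the relations it needs.} You have the right overall shape. For each non-Hoffman $\kk\in\Ibb_N$ one needs an integral element of $\mathcal{I}_{\mathrm{CF}}$ congruent modulo $2$ to $\word(\kk)$ plus Hoffman words and strictly smaller words, and then one inverts a matrix that is unitriangular modulo $2$. But producing those elements is the entire content of the theorem, and you do not do it. Your two ``kinds of steps'' are not actual relations:
\begin{itemize}
\item you do not say which $w\in\A^{0}_{z}$ realizes the substitution of $e_0^3$ or the trade of $e_1e_1$;
\item you do not verify that Hoffman-type or derivation relations lie in $\mathcal{I}_{\mathrm{CF}}$ over $\Z$;
\item you compute no leading term modulo $2$.
\end{itemize}
You flag exactly this as the ``main obstacle'', so what you have is a plan rather than a proof.

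Some surrounding claims are also off. The relations are homogeneous of weight $N$, so the outer induction on $N$ and the ``products or lower-weight terms'' play no role. A relation with powers of $2$ in the denominators of the non-leading terms is exactly what must be avoided over $\Z_{(2)}$: clearing denominators makes the leading coefficient even.

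\textbf{The missing idea is a uniform family of relations whose reduction modulo $2$ can actually be computed.} The paper filters by the number of odd entries $r=\odd(\kk)$, not by $1$'s and entries $\ge 4$. Since $\odd(\bb)\equiv N \pmod 2$ for indices of weight $N$, error terms drop to $\mathcal{F}_{r-2,N}$. It uses
$R(\kk)=\pi\mathscr{D}_{r}\Delta T^{01}V(\kk)$, where $V(\kk)$ encodes $(\lceil k_i/2\rceil)_i$ with auxiliary labels marking the odd entries. Three facts make $\operatorname{cf}'(R(\kk))$ tractable:
\begin{itemize}
\item $T^{01}$ inverts the confluence map on $e_0$-free words, so $\varphi_{\otimes,F}(T^{01}V)=1\otimes V$.
\item $\varphi_{\otimes,F}\circ\Delta\equiv(\Delta\otimes\Delta)\circ\varphi_{\otimes,F}$ modulo $2$; this is where reduction modulo $2$ enters.
\item The operators $\partial_{z_i,0}$ intertwine with $\varphi_{\otimes,F}$ (acting on the second tensor factor). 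Modulo words with fewer odd entries, they restore the odd entries of $\kk$ one at a time.
\end{itemize}
Consequently $\lambda\varphi_{\sh}(R(\kk))\equiv\word(\kk)$ modulo $2\A^{0}_{N}+\mathcal{F}_{r-2,N}$.

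For the other term, every word of $R(\kk)$ has no $e_0e_0$ and at least $n-r$ letters $e_0$, where $n=(N+r)/2$. So after $\tau_{\infty}$ it involves only indices $\avec$ with all entries $\ge 2$ and $|\avec|-2\depth(\avec)\le r$, which lie in $\Hcal_{r,N}+\mathcal{F}_{r-2,N}$. Nothing in your proposal plays the role of these constructions.
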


Applying the evaluation map \eqref{eq:numerical-evaluation} to \cref{thm:main}
gives the following corollary.
\begin{cor}
\label{cor:ordinary-mzv}\Cref{conj:hirose-sato} holds.
\end{cor}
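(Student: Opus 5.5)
The statement to prove is \cref{cor:ordinary-mzv}, that is, \cref{conj:hirose-sato}. The plan is to push \cref{thm:main} through the numerical evaluation map \eqref{eq:numerical-evaluation}. The only inputs are that this map is well defined and $\Z_{(2)}$-linear, and that it sends $\zeta^{\mathrm{cf}}(\kk)$ to $\zeta(\kk)$ for every admissible index $\kk$.

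\emph{Step 1: well-definedness and linearity.} Consider the $\Z$-linear map $\A^{0}\to\R$ sending $\word(\kk)$ to $(-1)^{d}\zeta(\kk)$, where $d$ is the depth of $\kk$. The words $\word(\kk)$ with $\kk$ admissible form a $\Z$-basis of $\A^{0}$, so this map is well defined. By \cite[Theorem 21]{HSconfluence}, every generator of $\mathcal{I}_{\mathrm{CF}}$ maps to $0$. Hence the map factors through $\A^{0}/\mathcal{I}_{\mathrm{CF}}$. Tensoring with $\Z_{(2)}$ over $\Z$, and using that $\R$ is a $\mathbb{Q}$-vector space, gives a $\Z_{(2)}$-linear map $\Zcal^{\mathrm{cf}}\to\R$. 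By \eqref{eq:Lmzv}, this map sends $\zeta^{\mathrm{cf}}(\kk)=(-1)^{d}L(\word(\kk))$ to $(-1)^{d}(-1)^{d}\zeta(\kk)=\zeta(\kk)$.

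\emph{Step 2: apply \cref{thm:main}.} Fix $\kk\in\Ibb_{N}$. \Cref{thm:main} gives coefficients $c_{\bb}\in\Z_{(2)}$ such that
\[
\zeta^{\mathrm{cf}}(\kk)=\sum_{\bb\in\Ibb^{\mathrm{H}}_{N}}c_{\bb}\,\zeta^{\mathrm{cf}}(\bb)\quad\text{in }\Zcal^{\mathrm{cf}}.
\]
Applying the $\Z_{(2)}$-linear map from Step 1 yields
\[
\zeta(\kk)=\sum_{\bb\in\Ibb^{\mathrm{H}}_{N}}c_{\bb}\,\zeta(\bb).
\]
This expresses $\zeta(\kk)$ as a $\Z_{(2)}$-linear combination of MZVs with Hoffman indices of the same weight $N$, which is exactly \cref{conj:hirose-sato}.

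All of the substance lies in \cref{thm:main}, which I take as given. The only point that needs care is Step 1: one must check that the confluence relations of \cite[Definition 22]{HSconfluence}, with the sign convention in \eqref{eq:Lmzv}, really do lie in the kernel of the evaluation map. This is precisely the content of \cite[Theorem 21]{HSconfluence}.
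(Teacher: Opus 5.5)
Your proposal is correct and follows the paper's argument: the paper obtains the corollary exactly by applying the $\Z_{(2)}$-linear evaluation map \eqref{eq:numerical-evaluation}, which comes from the confluence relations, to \cref{thm:main}. Your Step 1 spells out details the paper leaves implicit, namely that the words $\word(\kk)$ form a basis of $\A^{0}$, that the map factors through $\mathcal{I}_{\mathrm{CF}}$ and extends to $\Z_{(2)}$, and that the sign in \eqref{eq:Lmzv} makes it send $\zeta^{\mathrm{cf}}(\kk)$ to $\zeta(\kk)$.
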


\subsection{Motivic consequences\label{sec:motivic-consequences}}

Furusho characterizes Drinfeld associators and $\mathrm{GRT}_{1}$
by confluence relations \cite[Theorem 1 and Corollary 2]{FurushoConfluence}.
Since the motivic Drinfeld associator satisfies the pentagon equation
\cite[Section 25.9]{AndreMotifs}, these relations hold for motivic
MZVs. \Cref{thm:main} and Brown's Hoffman basis theorem \cite[Theorem 1.1]{BrownMT}
therefore imply that confluence relations or equivalently associator
relations generate all rational relations among motivic MZVs. This
identifies the de Rham motivic Galois group of mixed Tate motives
over $\Z$ with $\mathrm{GRT}$, yielding an alternative proof of
the Deligne--Drinfeld conjecture.

A proof of the Deligne--Drinfeld conjecture was previously presented
in the OpenAI Math Release \cite{OpenAIDD}. A simplified version
of this proof was subsequently given by Willwacher \cite{WillwacherDD}.
Both the OpenAI proof and the present argument use reduction modulo
$2$. 

\subsection{Outline of the proof of the main theorem\label{sec:proof-outline}}

For an admissible index $\kk=(k_{1},\ldots,k_{d})$, put
\[
\odd(\kk)\coloneqq\#\{i:k_{i}\text{ is odd}\},\qquad\depth(\kk)\coloneqq d.
\]

The key object is $R(\kk)\coloneqq\pi\mathscr{D}_{r}\Delta T^{01}V(\kk)$
for a nonempty admissible index $\kk=(k_{1},\ldots,k_{d})$ with $r\coloneqq\odd(\kk)$.
Here $V(\kk)$ is a word encoding $(\lceil k_{1}/2\rceil,\ldots,\lceil k_{d}/2\rceil)$,
with auxiliary labels $z_{0},\ldots,z_{r}$ recording the positions
of the odd entries of $\kk$. The map $T^{01}$ converts it into a
linear combination of words without $e_{0}$, and $\Delta$ replaces
each letter $e_{a}$ by $e_{a}e_{0}$. The operator $\mathscr{D}_{r}$
applies the algebraic differential operators $\partial_{z_{0},0},\ldots,\partial_{z_{r-1},0}$
in this order, and $\pi$ replaces all auxiliary letters by $e_{z}$.
The precise construction is given in \cref{sec:construction}.

Write $\A^{0}_{N}$ for the $\Z$-module spanned by the words of length
$N$ in $\A^{0}$. Put
\begin{align*}
\Hcal_{r,N} & \coloneqq\sum_{\substack{\bb\in\Ibb^{\mathrm{H}}_{N},\,\\
\odd(\bb)\le r
}
}\Z\,\word(\bb),\\
\mathcal{F}_{r,N} & \coloneqq\sum_{\substack{\bb\in\Ibb_{N}\\
\odd(\bb)\le r
}
}\Z\,\word(\bb).
\end{align*}
The map $\operatorname{cf}'$, defined in \cref{sec:confluence},
is a duality-modified confluence relation map. We estimate the two
terms of
\[
\operatorname{cf}'\bigl(R(\kk)\bigr)=\tau_{\infty}\bigl(\lambda(R(\kk))\bigr)-\lambda\bigl(\varphi_{\sh}(R(\kk))\bigr)\in\mathcal{I}_{\mathrm{CF}}.
\]
The following proposition is proved in \cref{sec:proof-first-term,sec:proof-second-term}.
\begin{prop}
\label{prop:two-term-estimates}Let $\kk=(k_{1},\ldots,k_{d})$ be
a nonempty admissible index, and put $N\coloneqq|\kk|$ and $r\coloneqq\odd(\kk)$.
Then
\begin{align*}
\textup{(i)}\quad & \tau_{\infty}(\lambda(R(\kk)))\in\Hcal_{r,N}+\mathcal{F}_{r-2,N},\\
\textup{(ii)}\quad & \lambda\bigl(\varphi_{\sh}(R(\kk))\bigr)\equiv\word(\kk)\pmod{2\A^{0}_{N}+\mathcal{F}_{r-2,N}}.
\end{align*}
\end{prop}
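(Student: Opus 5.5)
We prove the two estimates separately. Both follow the same pattern: expand $R(\kk)=\pi\mathscr{D}_{r}\Delta T^{01}V(\kk)$ into a $\Z$-linear combination of words, and track two statistics of each word, namely its length and how its letters $e_{1}$ split the word into blocks of $e_{0}$'s, i.e.\ the parities of the entries of the associated index. The key structural input is the following. Before $\mathscr{D}_{r}$ is applied, $\Delta T^{01}V(\kk)$ is a combination of words in the letters $e_{a}e_{0}$. Under $\lambda$ and $\tau_{\infty}$, such a word gives an index whose entries are all even, except where an auxiliary letter $z_{i}$ was placed. Each operator $\partial_{z_{i},0}$ can alter parity only locally, near the position of $z_{i}$. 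Since exactly $r$ auxiliary positions are removed, the number of odd entries is at most $r$. I would first prove a lemma: every word occurring in $\pi\mathscr{D}_{r}\Delta T^{01}V(\kk)$, after $\lambda$, lies in $\mathcal{F}_{r,N}$, and the weight is preserved because each step of the construction is homogeneous of length $N$. I would then isolate the terms with exactly $r$ odd entries; all others lie in $\mathcal{F}_{r-1,N}$. Odd entries should occur in pairs relative to the parity of $N$, since $N\equiv r\pmod 2$. Hence $\mathcal{F}_{r-1,N}\cap\Ibb_{N}$-words lie in $\mathcal{F}_{r-2,N}$: the number of odd entries of any index of weight $N$ is congruent to $N$ mod $2$.

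For (i), the remaining task is to show that the top-parity terms of $\tau_{\infty}(\lambda(R(\kk)))$ are Hoffman words. The plan is to show that after $\tau_{\infty}$, which I expect is a duality-type or reversal operation sending the block structure to its complement, every entry with exactly $r$ odd entries has all entries in $\{2,3\}$. Concretely, each factor $e_{a}e_{0}$ coming from $\Delta$ should become an entry $2$, and each derivative $\partial_{z_{i},0}$ should insert exactly one extra $e_{0}$, turning a $2$ into a $3$. If instead a derivative merges two blocks or produces a $1$ or an entry $\ge4$, this should cost parity: two odd positions disappear, so the term falls into $\mathcal{F}_{r-2,N}$. I would verify this by a case analysis on what $\partial_{z_{i},0}$ does to a neighbouring pair of letters.

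For (ii), I would work modulo $2$. The operator $T^{01}$, which turns $e_{0}$'s into sums over $e_{1}$-words, the shuffle regularization $\varphi_{\sh}$, and the Leibniz-type expansion of $\mathscr{D}_{r}$ all produce binomial-type coefficients. I expect that modulo $2$, and modulo terms with fewer odd entries, all of these terms cancel except one. The idea is that the doubling $\lceil k_{i}/2\rceil\mapsto$ pairs $e_{a}e_{0}$, combined with the identity $(x+y)^{2}\equiv x^{2}+y^{2}$ (Frobenius mod $2$) applied to shuffle powers, leaves only the diagonal term. That surviving term reconstructs $\word(\kk)$, with each odd $k_{i}$ recovered from $2\lceil k_{i}/2\rceil-1$ by the derivative at $z$.

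The main obstacle is (ii): controlling the mod-$2$ cancellation through $\varphi_{\sh}$, and checking that the unique surviving term has coefficient exactly $1$. I would first try induction on the depth $d$, peeling off the last entry $k_{d}$. I would also aim to establish a mod-$2$ multiplicativity of $\lambda\varphi_{\sh}$ with respect to the block decomposition of $V(\kk)$, so that the problem reduces to the single-entry cases $\kk=(2m)$ and $\kk=(2m+1)$, which can be computed directly.
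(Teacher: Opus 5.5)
There is a genuine gap, and it is mainly in part (ii). As written, (ii) restates the claim ("all terms cancel mod $2$ except one") rather than proving it. The mechanisms you propose to force the cancellation are not available:
\begin{itemize}
\item $T^{01}$ does not respect concatenation. It is not even defined on the individual blocks of $V(\kk)$.
\item The labels $c_i$ and the operator $\mathscr{D}_r$ couple neighbouring blocks. Each odd entry is produced by $\partial_{z_m,0}$ deleting a zero in the last zero run of the $z_m$-labelled segment, at its junction with the $z_{m+1}$-segment. So there is no block-multiplicativity of $\lambda\varphi_{\sh}$ and no reduction to $\kk=(2m)$ or $(2m+1)$.
\item The mod-$2$ input is not a Frobenius identity for shuffle powers.
\end{itemize}
The missing idea is that no cancellation analysis is needed at all. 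By construction $\varphi_{\otimes,F}=1\otimes T^{10}$ on $\Wcal^{(1)}_{F}$, and $T^{01}$ is defined as the inverse of $T^{10}$, so $\varphi_{\otimes,F}(T^{01}V)=1\otimes V$ exactly. Add three compatibilities: $\varphi_{\otimes,F}$ intertwines every $\partial_{\alpha,\beta}$ (acting on the second factor); it commutes with $\pi$; and it commutes with the concatenation doubling $\Delta$ modulo $2$, which comes from $\partial_{1,0}^{2}\Delta\equiv\Delta\partial_{1,0}$ and $\partial_{1,a}\partial_{1,0}\Delta\equiv\Delta\partial_{1,a}$. Together these give $\varphi_{\sh}(R(\kk))\equiv\pi\mathscr{D}_{r}\Delta V(\kk)\pmod 2$ directly. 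What remains is a leading-term computation: $\partial_{z_m,0}U_m\equiv-U_{m+1}$ modulo words with at most $m-1$ odd entries. Here $U_m$ is $\Delta V(\kk)$ with the zero runs at the first $m$ odd positions shortened by one. You also need an admissibility check so that $\lambda$ acts as $z\to1$.

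For (i), dualizing and counting is the right idea, but you misdescribe the mechanism and omit the decisive fact. The derivatives only delete letters. After $\tau_{\infty}$, deleting a zero merges two entries $2,2$ into a $3$, and weight drops by one. So merging is the generic good case, not a defect. Also, an entry $1$ cannot be dismissed as "costing parity": it is itself an odd non-Hoffman entry and would break (i), so you must show it never occurs. The facts you need are:
\begin{itemize}
\item In the alternating words $e_{b_1}e_0\cdots e_{b_n}e_0$ of $\Delta T^{01}V(\kk)$, with $n=(N+r)/2$, deleting a nonzero letter between two zeros has coefficient $1-1=0$. Hence no word of $R(\kk)$ contains $e_0e_0$, and at least $n-r$ zeros survive.
\item For words ending in $e_z$, $\lambda$ is not just $z\to1$. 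You must check that a zero survives after the last $e_1$, and that regularization preserves both properties above.
\end{itemize}
Then every index $\avec$ in $\tau_{\infty}\lambda(R(\kk))$ has all entries $\ge2$ and $\depth(\avec)\ge n-r$, so $|\avec|-2\depth(\avec)\le r$. Since $\odd(\avec)\le|\avec|-2\depth(\avec)$, with equality exactly for Hoffman indices, and $\odd(\avec)\equiv N\equiv r\pmod 2$, part (i) follows.
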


This proposition gives a proof of \cref{thm:main} as follows. Fix
$N\in\Z_{\geq0}$. If $\Ibb_{N}=\Ibb^{\mathrm{H}}_{N}$, the claim
is immediate. Define integers $m_{\kk,\bb}$ by
\[
\operatorname{cf}'\bigl(R(\kk)\bigr)=\sum_{\bb\in\Ibb_{N}}m_{\kk,\bb}\word(\bb)\qquad(\kk\in\Ibb_{N}\setminus\Ibb^{\mathrm{H}}_{N}).
\]

Let $M_{N}$ and $M_{N}'$ be the matrices $(m_{\kk,\bb})_{\kk,\bb\in\Ibb_{N}\setminus\Ibb^{\mathrm{H}}_{N}}$
and $(m_{\kk,\bb})_{\kk\in\Ibb_{N}\setminus\Ibb^{\mathrm{H}}_{N},\bb\in\Ibb^{\mathrm{H}}_{N}}$.
Order $\Ibb_{N}\setminus\Ibb^{\mathrm{H}}_{N}$ in nondecreasing order
of $\odd$. By \cref{prop:two-term-estimates}, for $\kk,\bb\in\Ibb_{N}\setminus\Ibb^{\mathrm{H}}_{N}$
we have
\[
m_{\kk,\bb}\equiv\br{\kk=\bb}\pmod 2\qquad\text{if }\odd(\bb)\ge\odd(\kk).
\]
Here $\br P$ equals $1$ if $P$ holds and $0$ otherwise. Thus $M_{N}$
is lower triangular modulo $2$ with all diagonal entries equal to
$1$. Hence $\det M_{N}$ is odd, so $M^{-1}_{N}$ has entries in
$\Z_{(2)}$.

By construction, for $\kk\in\Ibb_{N}\setminus\Ibb^{\mathrm{H}}_{N}$
we have in $\A^{0}_{N}\otimes_{\Z}\Z_{(2)}$
\[
\word(\kk)=\sum_{\bb\in\Ibb_{N}\setminus\Ibb^{\mathrm{H}}_{N}}(M^{-1}_{N})_{\kk,\bb}\operatorname{cf}'\bigl(R(\bb)\bigr)-\sum_{\bb\in\Ibb^{\mathrm{H}}_{N}}(M^{-1}_{N}M_{N}')_{\kk,\bb}\word(\bb).
\]
The first sum vanishes under the map induced by $L$. Hence \eqref{eq:Lmzv}
gives the explicit expression
\[
\zeta^{\mathrm{cf}}(\kk)=-\sum_{\bb\in\Ibb^{\mathrm{H}}_{N}}(-1)^{\depth(\kk)+\depth(\bb)}(M^{-1}_{N}M_{N}')_{\kk,\bb}\,\zeta^{\mathrm{cf}}(\bb).
\]
All coefficients lie in $\Z_{(2)}$, so this proves \cref{thm:main}.

The argument above gives a simple and complete solution to \cite[Problem 1.4]{HMSWdrop1}.

\section{Confluence relations\label{sec:confluence}}

\subsection{The shuffle product and algebraic differential operators\label{sec:words}}

Let $F$ be a finite set disjoint from $\{0,1\}$. We call the elements
of $\{0,1\}\cup F$ labels. Put 
\[
\A_{F}\coloneqq\Z\langle e_{0},e_{1},e_{a}\ (a\in F)\rangle.
\]
Let $\A^{0}_{F}$ be the module spanned by the empty word and words
that neither begin with $e_{0}$ nor end with $e_{1}$.

The shuffle product $\sh$ on $\A_{F}$ is the bilinear operator defined
by $1\sh w\coloneqq w$, $w\sh1\coloneqq w$ and
\[
(e_{a}u)\sh(e_{b}v)\coloneqq e_{a}\bigl(u\sh e_{b}v\bigr)+e_{b}\bigl(e_{a}u\sh v\bigr).
\]
It is the sum of all interleavings that preserve the order of the
letters within each input word. Associativity and commutativity follow
from this description. The shuffle product is graded by word length.
\begin{defn}
\label{def:deletion} For distinct labels $\alpha,\beta$, define
the algebraic differential operator on $\A_{F}$ by
\[
\partial_{\alpha,\beta}(e_{a_{1}}\cdots e_{a_{n}})\coloneqq\sum^{n}_{i=1}\bigl(\br{\{a_{i},a_{i+1}\}=\{\alpha,\beta\}}-\br{\{a_{i-1},a_{i}\}=\{\alpha,\beta\}}\bigr)e_{a_{1}}\cdots\widehat{e_{a_{i}}}\cdots e_{a_{n}}
\]
where $(a_{0},a_{n+1})\coloneqq(0,1)$.
\end{defn}

\subsection{The map $\varphi_{\otimes,F}$\label{subsec:tensor-map}}

Set
\begin{align*}
\Wcal^{(0)}_{F} & \coloneqq\Z\oplus\sum_{a\in F}e_{a}\Z\langle e_{0},e_{b}\ (b\in F)\rangle,\\
\Wcal^{(1)}_{F} & \coloneqq\Z\oplus\sum_{a\in F}\Z\langle e_{1},e_{b}\ (b\in F)\rangle e_{a}.
\end{align*}
These are the submodules of $\A^{0}_{F}$ spanned by words without
$e_{1}$ and $e_{0}$, respectively.
\begin{defn}
Define $\varphi_{\otimes,F}:\A^{0}_{F}\to\A^{0}\otimes\Wcal^{(0)}_{F}$
by 
\begin{equation}
\varphi_{\otimes,F}(w)\coloneqq\sum_{m\ge0}\ \sum_{b_{1},\ldots,b_{m}\in\{0\}\cup F}\Const(\partial_{1,b_{1}}\cdots\partial_{1,b_{m}}w)\otimes e_{b_{1}}\cdots e_{b_{m}}\label{eq:Psi}
\end{equation}
where the map $\Const$ sends every word containing a letter $e_{a}$
with $a\in F$ to $0$ and fixes every word in $e_{0},e_{1}$ alone.
The operators act from right to left. The sum is finite because each
derivative decreases the length by one.
\end{defn}

By \cref{lem:calculus}, the first tensor factors in \eqref{eq:Psi}
lie in $\A^{0}$ because the algebraic differential operators preserve
admissible words. There are no terms whose second factor begins with
$e_{0}$, since their coefficients are of the form $\Const\partial_{1,0}(\cdots)=0$.
Thus the stated codomain is correct.

With the convention $(u\otimes v)e_{a}\coloneqq u\otimes ve_{a}$,
the definition gives:
\begin{equation}
\varphi_{\otimes,F}(w)=\Const(w)\otimes1+\varphi_{\otimes,F}(\partial_{1,0}w)e_{0}+\sum_{a\in F}\varphi_{\otimes,F}(\partial_{1,a}w)e_{a}.\label{eq:Psi-rec}
\end{equation}

\subsection{Specialization to one auxiliary letter\label{subsec:confluence-specialization}}

For $F=\{z\}$, write
\[
\A_{z}\coloneqq\A_{\{z\}},\qquad\A^{0}_{z}\coloneqq\A^{0}_{\{z\}},\qquad\varphi_{\otimes}\coloneqq\varphi_{\otimes,\{z\}}.
\]
Define
\[
\varphi_{\sh}\coloneqq\operatorname{mult}_{\sh}\circ\varphi_{\otimes}:\A^{0}_{z}\longrightarrow\A^{0}_{z}
\]
where $\operatorname{mult}_{\sh}$ is the $\mathbb{Z}$-linear map
sending $u\otimes v$ to $u\sh v$.

Let $\lambda:\A^{0}_{z}\to\A^{0}$ be the map defined in \cite[Definition 19]{HSconfluence}.
Its construction gives a shuffle algebra homomorphism over $\Z$ preserving
word length. Write $\left.w\right|_{z\to1}$ for the word obtained
from $w$ by replacing $e_{z}$ with $e_{1}$. Set
\[
\A^{-2}_{z}\coloneqq\Z\oplus e_{1}\A_{z}e_{0}\oplus e_{z}\A_{z}e_{0}.
\]
Then 
\begin{equation}
\lambda(w)=\left.w\right|_{z\to1}\quad(w\in\A^{-2}_{z}),\qquad\lambda(e_{z})=0.\label{eq:lambda-B}
\end{equation}

Define the $\Z$-linear map
\[
\operatorname{cf}:\A^{0}_{z}\longrightarrow\A^{0},\qquad\operatorname{cf}(w)\coloneqq\lambda\bigl(w-\varphi_{\sh}(w)\bigr).
\]
By \cite[Definition 22 and Remark 23]{HSconfluence}, the submodule
introduced in \cref{sec:statement} is
\begin{equation}
\mathcal{I}_{\mathrm{CF}}=\{\operatorname{cf}(w):w\in\A^{0}_{z}\}.\label{eq:confluence-ideal}
\end{equation}

Let $\tau_{\infty}$ be the anti-automorphism defined by $\tau_{\infty}(e_{0})\coloneqq-e_{1}$
and $\tau_{\infty}(e_{1})\coloneqq-e_{0}$. The duality relations
satisfy
\begin{equation}
v-\tau_{\infty}v\in\mathcal{I}_{\mathrm{CF}}\qquad(v\in\A^{0})\label{eq:duality}
\end{equation}
by \cite[Theorem 30]{HSconfluence}.

For the reduction, we use the $\Z$-linear map $\operatorname{cf}':\A^{0}_{z}\longrightarrow\A^{0}$
defined by
\[
\operatorname{cf}'(w)\coloneqq\tau_{\infty}\bigl(\lambda(w)\bigr)-\lambda\bigl(\varphi_{\sh}(w)\bigr).
\]
By \eqref{eq:confluence-ideal} and \eqref{eq:duality}, it satisfies
\[
\operatorname{cf}'(w)=\operatorname{cf}(w)+\tau_{\infty}\bigl(\lambda(w)\bigr)-\lambda(w)\in\mathcal{I}_{\mathrm{CF}}.
\]

\section{Identities for algebraic differential operators\label{sec:operators}}
\begin{lem}
\label{lem:calculus}The algebraic differential operators preserve
$\A^{0}_{F}$ and, for $a\in F$, satisfy the following identities:
\begin{align}
\partial_{1,0}u=\partial_{1,a}u & =0\quad(u\in\A^{0}),\label{eq:kill-A0}\\
\Const\partial_{1,0}u=\Const(\partial_{a,0}+\partial_{1,a})u & =0\quad(u\in\A^{0}_{F}).\label{eq:Const-identities}
\end{align}
The identities
\begin{align}
[\partial_{\alpha,\beta},\partial_{\alpha,\gamma}+\partial_{\beta,\gamma}] & =0,\label{eq:comm0}\\{}
[\partial_{\alpha,\beta},\partial_{\gamma,\delta}] & =0\label{eq:commb}
\end{align}
hold whenever the labels in each identity are pairwise distinct. Here
$[P,Q]\coloneqq PQ-QP$.
\end{lem}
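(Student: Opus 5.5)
All five claims will be verified directly from \cref{def:deletion}. Write the coefficient of the deleted letter $e_{a_i}$ in $\partial_{\alpha,\beta}$ as $c_i=\br{\{a_i,a_{i+1}\}=\{\alpha,\beta\}}-\br{\{a_{i-1},a_i\}=\{\alpha,\beta\}}$, with boundary letters $a_0=0$ and $a_{n+1}=1$. The $i$-th term is nonzero only when $e_{a_i}$ has a neighbour forming the pair $\{\alpha,\beta\}$ with it. So deleting letters touches only local configurations, and all statements reduce to bookkeeping of adjacent pairs.

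\emph{Preservation of $\A^0_F$.} Take $w=e_{a_1}\cdots e_{a_n}$ with $a_1\ne0$ and $a_n\ne1$. A term of $\partial_{\alpha,\beta}w$ can fail to be admissible only when we delete $e_{a_1}$ and $a_2=0$, or delete $e_{a_n}$ and $a_{n-1}=1$. In the first case $c_1=\br{\{a_1,0\}=\{\alpha,\beta\}}-\br{\{0,a_1\}=\{\alpha,\beta\}}=0$, since the boundary $a_0=0$ equals $a_2$. The last case cancels in the same way using $a_{n+1}=1$. Deletions leaving an empty word are harmless.

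\emph{\eqref{eq:kill-A0}.} For $u$ a word in $e_0,e_1$ we have $\{0,1\}$-pairs only. For $\partial_{1,a}$ with $a\in F$ no letter $e_a$ occurs, so every $c_i=0$. For $\partial_{1,0}$ the word (including boundaries) has the form $0\,1\cdots0\,1$. Group consecutive maximal blocks: deleting any letter inside a run of equal letters gives the same word, and we sum the $c_i$ over each run. For a run of $e_0$'s between $e_1$'s (or a boundary), the sum is $\br{\text{next}=1}-\br{\text{prev}=1}$, which telescopes. Since the run lies between $1$'s in an admissible word, or starts after the boundary $0$ only when $a_1=0$ (excluded), each run contributes $0$. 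I will organize this as: the total coefficient of a given deletion word equals a telescoping sum that vanishes because $a_0=0\ne a_1$-type conditions hold. This is the step I expect to need the most care, so I would check it on the general run structure $e_1^{p_1}e_0^{q_1}\cdots$.

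\emph{\eqref{eq:Const-identities}.} $\Const\partial_{1,0}u$ only sees terms where the resulting word has no $F$-letters, which forces $u$ to already be free of $F$-letters. Hence it reduces to \eqref{eq:kill-A0}. For $\Const(\partial_{a,0}+\partial_{1,a})u$, the surviving terms come from words with exactly one $e_a$, and from deleting that $e_a$. With neighbours $x,y\in\{0,1\}$, the coefficient is $(\br{y=0}-\br{x=0})+(\br{y=1}-\br{x=1})=1-1=0$. Other $F$-letters, or several occurrences, give no constant terms.

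\emph{Commutators.} Compute $\partial P\,\partial Q$ on a word: it deletes two letters at positions $i<j$. If $j>i+1$ and the deletions are non-adjacent with disjoint neighbourhoods, the coefficients factor and the terms commute. The only noncommuting contributions come from $j=i+1$, or from $j=i+2$ where the first deletion creates a new adjacency. For \eqref{eq:commb} with $\{\alpha,\beta\}\cap\{\gamma,\delta\}=\emptyset$, creating a new $\{\gamma,\delta\}$ neighbour pair by deleting an $\alpha$ or $\beta$ letter changes the coefficient. I would check the three-letter windows $x\,a_i\,y$ exhaustively and verify the two orders agree after summing. For \eqref{eq:comm0}, the same windowed check applies to three-letter patterns over $\{\alpha,\beta,\gamma\}$ plus other letters. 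The combination $\partial_{\alpha,\gamma}+\partial_{\beta,\gamma}$ is exactly what makes deleting an $\alpha$ or $\beta$ next to $\gamma$ insensitive to which of $\alpha,\beta$ remains. I expect this finite case check to be the main obstacle: it is routine but must be organized by windows of length at most four.
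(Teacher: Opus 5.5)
Your arguments for the preservation of $\A^{0}_{F}$, for \eqref{eq:kill-A0}, and for \eqref{eq:Const-identities} are correct and agree with the paper. Your run-by-run telescoping spells out what the paper calls ``directly from the definition''. In the $\Const\partial_{1,0}$ step, state explicitly that $\partial_{1,0}$ never deletes an $F$-letter, since a deleted letter must carry a label in $\{0,1\}$.

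The commutator identities are the substantive part, and they are not actually proved. You announce an exhaustive window check without carrying it out, and the locality reduction you state is imprecise.

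\emph{The reduction.} Write $c^{P}_{i}$ for the coefficient with which $P$ deletes position $i$ of the original word. For $i<j$ with $j\ge i+2$, deleting either letter leaves the neighbours of the other unchanged; at $j=i+2$ the shared neighbour $a_{i+1}$ survives. So $PQ$ and $QP$ both give the coefficient $c^{P}_{i}c^{Q}_{j}+c^{P}_{j}c^{Q}_{i}$, and these terms cancel. No ``disjoint neighbourhood'' condition is needed, and $j=i+2$ never contributes. Only adjacent letters $e_{p}e_{q}$ contribute, and their contribution depends only on $(l,p,q,r)$, where $l,r$ are the outer neighbouring labels (boundary labels included). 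You then have to show that this contribution vanishes.

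\emph{The case \eqref{eq:commb}.} If $\{p,q\}$ is $\{\alpha,\beta\}$ or $\{\gamma,\delta\}$, every coefficient of the other operator is $0$. Otherwise, put $\chi_{X}(x,y)\coloneqq\br{\{x,y\}=X}$, $\chi_{P}\coloneqq\chi_{\{\alpha,\beta\}}$ and $\chi_{Q}\coloneqq\chi_{\{\gamma,\delta\}}$. The contribution is then a signed sum of $\chi_{P}(l,p)\chi_{Q}(l,q)$, $\chi_{Q}(l,p)\chi_{P}(l,q)$, $\chi_{P}(q,r)\chi_{Q}(p,r)$ and $\chi_{Q}(q,r)\chi_{P}(p,r)$. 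Each product forces $l$ or $r$ into $\{\alpha,\beta\}\cap\{\gamma,\delta\}=\varnothing$, so all vanish.

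\emph{The case \eqref{eq:comm0}.} Your heuristic addresses $\{p,q\}=\{\alpha,\beta\}$. There the contributions to $[\partial_{\alpha,\beta},\partial_{\alpha,\gamma}]$ and $[\partial_{\alpha,\beta},\partial_{\beta,\gamma}]$ are $\pm(\br{r=\gamma}-\br{l=\gamma})$ with opposite signs; this is the computation the paper displays. The cases $\{p,q\}=\{\gamma,x\}$ with $x\in\{\alpha,\beta\}$ must also be checked. Let $\bar{x}$ be the other element of $\{\alpha,\beta\}$. Then the two commutators contribute $\pm(\br{r=\bar{x}}-\br{l=\bar{x}})$ with opposite signs, so again they cancel only in the sum. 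All other $(p,q)$ give $0$ term by term.

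As it stands, the proposal proves the preservation statement, \eqref{eq:kill-A0} and \eqref{eq:Const-identities}, but not \eqref{eq:comm0} or \eqref{eq:commb}.
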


\begin{proof}
A deletion can produce an initial $e_{0}$ only by removing the first
letter. If the next letter is $e_{0}$, the two neighboring labels
are both $0$, so the coefficient is $0$. The argument for a terminal
$e_{1}$ is analogous. Thus every algebraic differential operator
preserves $\A^{0}_{F}$.

The identities \eqref{eq:kill-A0} follow directly from the definition.

Since $\partial_{1,0}$ does not delete auxiliary letters, it also
follows that $\Const\partial_{1,0}=0$. The expression $\Const(\partial_{a,0}+\partial_{1,a})$
can be nonzero only when the word contains exactly one auxiliary letter,
namely $e_{a}$. Each neighboring label is $0$ or $1$, so the combined
deletion coefficient of the two operators is $1-1=0$. This proves
\eqref{eq:Const-identities}.

For the commutator identities, terms deleting two nonadjacent positions
cancel. Consider adjacent letters $e_{p},e_{q}$ with outer neighboring
labels $l,r$, including the fixed endpoint labels when necessary.
For $(p,q)=(\alpha,\beta)$, the contributions from deleting both
positions to
\[
[\partial_{\alpha,\beta},\partial_{\alpha,\gamma}]\quad\text{and}\quad[\partial_{\alpha,\beta},\partial_{\beta,\gamma}]
\]
are, respectively,
\[
\br{r=\gamma}-\br{l=\gamma}\quad\text{and}\quad\br{l=\gamma}-\br{r=\gamma}.
\]
The same cancellation holds for the other ordered pairs of distinct
labels in $\{\alpha,\beta,\gamma\}$; all remaining pairs give zero.
This proves \eqref{eq:comm0}. Equation \eqref{eq:commb} follows
by a direct calculation from the definition.
\end{proof}

\section{The doubling homomorphism\label{sec:doubling}}

Define the doubling homomorphism $\Delta:\A_{F}\to\A_{F}$ with respect
to concatenation by
\[
\Delta(e_{a})\coloneqq e_{a}e_{0}\qquad(a\in\{0,1\}\cup F).
\]

\begin{lem}
\label{lem:double-deletions}For $w\in\A^{0}_{F}$ and $a\in F$,
we have 
\begin{gather}
\partial_{1,a}\Delta w=0,\qquad\Const(\partial_{1,0}\Delta w)=0,\label{eq:double-basic}\\
\partial^{2}_{1,0}\Delta w\equiv\Delta(\partial_{1,0}w),\qquad\partial_{1,a}\partial_{1,0}\Delta w\equiv\Delta(\partial_{1,a}w)\pmod 2.\label{eq:double-second}
\end{gather}
\end{lem}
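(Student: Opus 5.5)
The plan is to compute everything explicitly on a word $w=e_{a_{1}}\cdots e_{a_{n}}\in\A^{0}_{F}$, for which
\[
\Delta w=e_{a_{1}}e_{0}e_{a_{2}}e_{0}\cdots e_{a_{n}}e_{0},
\]
with the boundary labels $(a_{0},a_{n+1})=(0,1)$ of \cref{def:deletion}. Every letter $e_{a_{i}}$ in an odd position has neighbours in $\{0\}$: the inserted $e_{0}$'s, or the left endpoint label $0$. Every inserted $e_{0}$ sits between $e_{a_{i}}$ and $e_{a_{i+1}}$, where $e_{a_{n+1}}$ is read as the right endpoint label $1$. From this the adjacent pairs of $\Delta w$, endpoints included, are exactly $(a_{i},0)$, $(0,a_{i+1})$ and the final pair $(0,1)$.

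\emph{Step 1: the identities \eqref{eq:double-basic}.} No adjacent pair of $\Delta w$ equals $\{1,a\}$ with $a\in F$, so $\partial_{1,a}\Delta w=0$. For the second identity I split into two cases.
\begin{itemize}
\item If $w$ contains an auxiliary letter, then $\partial_{1,0}$ never deletes it, so $\Const\partial_{1,0}\Delta w=0$.
\item Otherwise $w\in\A^{0}$. Then $\Delta w$ begins with $e_{1}$ (or is empty) and ends with $e_{0}$, so $\Delta w\in\A^{0}$, and \eqref{eq:kill-A0} of \cref{lem:calculus} gives $\partial_{1,0}\Delta w=0$.
\end{itemize}

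\emph{Step 2: an explicit formula for $\partial_{1,0}\Delta w$.} A letter $e_{a_{i}}$ in an odd position has coefficient $0$: its two neighbours are both labelled $0$, or, when $a_{i}=1$, the contributions are $+1-1$. The inserted $e_{0}$ after $e_{a_{i}}$ has coefficient
\[
c_{i}\coloneqq\br{a_{i+1}=1}-\br{a_{i}=1}.
\]
Hence
\[
\partial_{1,0}\Delta w=\sum_{i=1}^{n}c_{i}u_{i},
\]
where $u_{i}$ is $\Delta w$ with the $e_{0}$ between $e_{a_{i}}$ and $e_{a_{i+1}}$ removed. Thus $u_{i}$ is again ``doubled'' except that $e_{a_{i}}e_{a_{i+1}}$ are now adjacent; for $i=n$, $e_{a_{n}}$ becomes adjacent to the right endpoint label $1$.

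\emph{Step 3: the identities \eqref{eq:double-second}.} Apply $\partial_{1,0}$, resp.\ $\partial_{1,a}$, to each $u_{i}$ and reduce mod $2$, so all signs disappear. The deletions fall into three kinds.
\begin{itemize}
\item \emph{Another inserted $e_{0}$ in $u_{i}$, say the one after $e_{a_{j}}$ with $j\neq i$.} This produces a word with two inserted $e_{0}$'s removed. The same word arises from the term $(j,i)$. I will check that its coefficient, a product of local $0/\pm1$ coefficients, is the same for $(i,j)$ and $(j,i)$ modulo $2$. This is the case $\br{a_{j+1}=1}-\br{a_{j}=1}$ when $|i-j|>1$; when $|i-j|=1$ the neighbours change and the coefficients must be compared by hand. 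Granting that, these terms cancel in pairs mod $2$.
\item \emph{A letter $e_{a_{j}}$ away from the junction.} Its neighbours are still both labelled $0$, so its coefficient is $0$ as in Step 2.
\item \emph{The letter $e_{a_{i}}$ or $e_{a_{i+1}}$ at the junction.} Deleting either leaves exactly $\Delta$ of $w$ with that letter removed. Its coefficient depends only on whether the local pair $\{a_{i},a_{i+1}\}$ equals $\{1,0\}$, resp.\ $\{1,a\}$, and on the parity of $c_{i}$.
\end{itemize}

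Collecting the third kind of term, the coefficient of $\Delta(w\text{ with }e_{a_{j}}\text{ deleted})$ receives contributions from the junctions $i=j-1$ and $i=j$. I then compare its parity with the coefficient of $e_{a_{1}}\cdots\widehat{e_{a_{j}}}\cdots e_{a_{n}}$ in $\partial_{1,0}w$, resp.\ $\partial_{1,a}w$. That coefficient is $\br{\{a_{j},a_{j+1}\}=\cdot}-\br{\{a_{j-1},a_{j}\}=\cdot}$, which is the same two-junction structure. In the $\partial_{1,a}$ case, $c_{i}$ is odd exactly when precisely one of $a_{i},a_{i+1}$ is $1$; for the junction to be $\{1,a\}$ the other must be $a$, so the parities match directly.

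I expect the main obstacle to be the bookkeeping in Step 3 for $\partial_{1,0}^{2}$. There the junction letters $e_{a_{i}},e_{a_{i+1}}$ may themselves be $e_{0}$ or $e_{1}$. A deletion at the junction can then coincide with a deletion of a neighbouring inserted $e_{0}$, for example when $a_{i+1}=0$, and the endpoint case $i=n$ with label $1$ also needs care. For these coincidences I will identify equal words first and only then add the coefficients mod $2$, rather than pairing terms naively. The generic pairing argument and the $\partial_{1,a}$ case should be routine.
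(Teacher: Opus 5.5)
\paragraph{Verdict.} There is a genuine gap, in the $\partial_{1,0}^{2}$ half of Step 3. That computation is the heart of the lemma, and you defer it with an incorrect description of how it will go. The approach itself is fixable.

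\paragraph{What is correct.} Steps 1 and 2 and the $\partial_{1,a}$ half of Step 3 are correct. In your first bullet nothing needs checking by hand. Removing one inserted $e_0$ never changes the neighbours of another, since each inserted $e_0$ always sits between $e_{a_j}$ and $e_{a_{j+1}}$. So the terms $(i,j)$ and $(j,i)$ both have coefficient $c_ic_j$, including when $|i-j|=1$. The coincidence you worry about when $a_{i+1}=0$ is also harmless, because these terms already cancel in pairs as formal terms.

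\paragraph{The gap.} You claim the junction coefficient depends only on whether $\{a_i,a_{i+1}\}=\{1,0\}$ and on the parity of $c_i$. This is false for $\partial_{1,0}$. In $u_i$ the letter $e_{a_i}$ has left neighbour $0$, and $e_{a_{i+1}}$ has right neighbour $0$. Their coefficients are therefore $\br{\{a_i,a_{i+1}\}=\{1,0\}}-\br{a_i=1}$ and $\br{a_{i+1}=1}-\br{\{a_i,a_{i+1}\}=\{1,0\}}$ respectively.

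Carry this out mod $2$. The total coefficient attached to position $j$, meaning to the word $\Delta(e_{a_1}\cdots\widehat{e_{a_j}}\cdots e_{a_n})$, agrees with the position-$j$ coefficient of $\partial_{1,0}w$ when $a_j\neq1$. When $a_j=1$ it differs by $\br{a_{j-1}\neq1}+\br{a_{j+1}\neq1}$, with $a_0=0$.

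So the position-by-position comparison via ``the same two-junction structure'' fails. For example, take $w=e_1e_1e_0$. The junction terms give coefficients $(0,0,2)$ at positions $1,2,3$, whereas $\partial_{1,0}w$ gives $(-1,1,0)$.

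\paragraph{The missing idea.} The discrepancies sit only on letters $e_1$, and deleting any $e_1$ from a maximal block of consecutive $e_1$'s produces the same word. Summing $\br{a_{j-1}\neq1}+\br{a_{j+1}\neq1}$ over such a block gives exactly $2$, because the block's two outer neighbours are $\neq1$ (use $a_0=0$, and $a_n\neq1$ for $w\in\A^0_F$). With this grouping your direct computation goes through. Your instinct to add coefficients only after identifying equal words is right, but it is needed here, not at the $a_{i+1}=0$ coincidences.

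\paragraph{How the paper does it.} The paper organizes the same cancellation differently.
\begin{itemize}
\item It first treats $w$ with no $e_0$, where $\partial_{1,0}w$ has only the initial-$e_1$ term.
\item There it artificially adds the two equal pair deletions at each $e_1e_0e_1$; this is exactly your block grouping. As a result, every non-initial $e_1$ has a left and a right pair deletion giving the same word, and these cancel.
\item It then reduces the general case by renaming $e_0$ to a fresh auxiliary letter $e_{0'}$, using $\partial_{1,0}\pi_0=\pi_0(\partial_{1,0}+\partial_{1,0'})$ and $\partial_{1,0'}\Delta\widetilde{w}=0$. This avoids all cases where junction letters are $e_0$.
\end{itemize}
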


\begin{proof}
By linearity, assume that $w$ is a nonempty word, the empty case
being immediate. Since no $e_{1}$ is adjacent to an auxiliary letter
in $\Delta w$, we have $\partial_{1,a}\Delta w=0$. The constant-term
identity follows because $\partial_{1,0}$ cannot delete auxiliary
letters; if $w$ has none, apply \eqref{eq:kill-A0} to $\Delta w\in\A^{0}$.
This proves \eqref{eq:double-basic}.

For \eqref{eq:double-second}, first suppose that $w$ contains no
$e_{0}$, and work modulo $2$. The first $\partial_{1,0}$ deletes
only inserted zeros, since every original letter has $e_{0}$ on both
sides, including the left boundary. In $\partial^{2}_{1,0}\Delta w$,
terms deleting two inserted zeros cancel by reversing the deletion
order. Each remaining term deletes an adjacent $e_{1}e_{0}$ or $e_{0}e_{1}$.
At $e_{1}e_{0}e_{1}$, the middle zero has coefficient $0$ in the
first deletion, but we may add both pair deletions since their results
agree and cancel modulo $2$. For each original $e_{1}$ not at the
start, the left and right deletions now give the same word and cancel.
Only an initial $e_{1}e_{0}$ remains, if present, yielding $\Delta(\partial_{1,0}w)$
modulo $2$.

If the second operator is $\partial_{1,a}$, only an inserted zero
between original letters $e_{1}$ and $e_{a}$, or in a final block
$e_{a}e_{0}$, can contribute. The second deletion removes either
adjacent original letter in the former case and the final $e_{a}$
in the latter. These are exactly the interior and right-boundary contributions
to $\partial_{1,a}w$, followed by $\Delta$.

For a general word $w$, replace each $e_{0}$ by a fresh auxiliary
letter $e_{0'}$, obtaining $\widetilde{w}$. Let $\pi_{0}$ be the
substitution $e_{0'}\mapsto e_{0}$, fixing all other letters; the
boundary labels remain $0,1$. The definition of the operators gives
\[
\partial_{1,0}\pi_{0}=\pi_{0}(\partial_{1,0}+\partial_{1,0'}),\qquad\partial_{1,a}\pi_{0}=\pi_{0}\partial_{1,a},\qquad\pi_{0}\Delta=\Delta\pi_{0}.
\]

Apply the case already proved to $\widetilde{w}$, with auxiliary
set $F\cup\{0'\}$. Since $\partial_{1,0'}\Delta\widetilde{w}=0$,
we obtain 
\begin{align*}
\partial^{2}_{1,0}\Delta w & =\pi_{0}\bigl(\partial^{2}_{1,0}+\partial_{1,0'}\partial_{1,0}\bigr)\Delta\widetilde{w}\\
 & \equiv\pi_{0}\Delta(\partial_{1,0}+\partial_{1,0'})\widetilde{w}=\Delta(\partial_{1,0}w)\pmod 2,\\
\partial_{1,a}\partial_{1,0}\Delta w & =\pi_{0}\partial_{1,a}\partial_{1,0}\Delta\widetilde{w}\\
 & \equiv\pi_{0}\Delta\partial_{1,a}\widetilde{w}=\Delta(\partial_{1,a}w)\pmod 2.\qedhere
\end{align*}
\end{proof}

\begin{lem}
\label{lem:doubling}For $w\in\A^{0}_{F}$, 
\begin{equation}
\varphi_{\otimes,F}(\Delta w)\equiv(\Delta\otimes\Delta)\varphi_{\otimes,F}(w)\pmod 2.\label{eq:doubling}
\end{equation}
\end{lem}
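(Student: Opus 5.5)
We argue by induction on the length of $w$, using the recursion \eqref{eq:Psi-rec} together with \cref{lem:double-deletions}. By linearity we may take $w$ to be a word in $\A^{0}_{F}$; the empty word is trivial, since $\Delta(1)=1$ and $\varphi_{\otimes,F}(1)=1\otimes1$. Apply \eqref{eq:Psi-rec} twice to $\Delta w$. First,
\[
\varphi_{\otimes,F}(\Delta w)=\Const(\Delta w)\otimes1+\varphi_{\otimes,F}(\partial_{1,0}\Delta w)e_{0}+\sum_{a\in F}\varphi_{\otimes,F}(\partial_{1,a}\Delta w)e_{a}.
\]
By \eqref{eq:double-basic} the sum over $a$ vanishes. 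Next expand $\varphi_{\otimes,F}(\partial_{1,0}\Delta w)$ by \eqref{eq:Psi-rec}. Its constant term is $\Const(\partial_{1,0}\Delta w)\otimes1=0$, again by \eqref{eq:double-basic}. This leaves
\[
\varphi_{\otimes,F}(\Delta w)=\Const(\Delta w)\otimes1+\varphi_{\otimes,F}(\partial^{2}_{1,0}\Delta w)e_{0}e_{0}+\sum_{a\in F}\varphi_{\otimes,F}(\partial_{1,a}\partial_{1,0}\Delta w)e_{a}e_{0}.
\]

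Now reduce modulo $2$ using \eqref{eq:double-second}. This gives $\partial^{2}_{1,0}\Delta w\equiv\Delta(\partial_{1,0}w)$ and $\partial_{1,a}\partial_{1,0}\Delta w\equiv\Delta(\partial_{1,a}w)$. Since $\varphi_{\otimes,F}$ is $\Z$-linear, it respects congruences modulo $2$. The words $\partial_{1,0}w$ and $\partial_{1,a}w$ lie in $\A^{0}_{F}$ by \cref{lem:calculus} and are shorter than $w$, so the induction hypothesis applies and gives
\[
\varphi_{\otimes,F}(\Delta w)\equiv\Const(\Delta w)\otimes1+\bigl((\Delta\otimes\Delta)\varphi_{\otimes,F}(\partial_{1,0}w)\bigr)e_{0}e_{0}+\sum_{a\in F}\bigl((\Delta\otimes\Delta)\varphi_{\otimes,F}(\partial_{1,a}w)\bigr)e_{a}e_{0}.
\]

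It remains to match this with $(\Delta\otimes\Delta)$ applied to \eqref{eq:Psi-rec} for $w$. Because $\Delta$ is a concatenation homomorphism, $\Delta(ve_{b})=\Delta(v)e_{b}e_{0}$, and so $(\Delta\otimes\Delta)(Xe_{b})=\bigl((\Delta\otimes\Delta)X\bigr)e_{b}e_{0}$. For the constant terms, $\Const(\Delta w)=\Delta(\Const w)$: both sides are zero if $w$ contains an auxiliary letter, and otherwise both equal $\Delta w$. Hence $\Const(\Delta w)\otimes1=(\Delta\otimes\Delta)(\Const(w)\otimes1)$, using $\Delta(1)=1$. Comparing the three kinds of terms proves \eqref{eq:doubling}.

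The main obstacle is already isolated in \cref{lem:double-deletions}. What remains is bookkeeping: checking that the recursion applied twice produces exactly the terms with second factor ending in $e_{b}e_{0}$. One should also confirm that congruences modulo $2$ pass through $\varphi_{\otimes,F}$, which holds because it is $\Z$-linear and maps $2\A^{0}_{F}$ into $2(\A^{0}\otimes\Wcal^{(0)}_{F})$.
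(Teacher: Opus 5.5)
Your proof is correct and is the paper's argument, written out in more detail: apply \eqref{eq:Psi-rec} twice to $\Delta w$, remove the unwanted terms with \eqref{eq:double-basic}, reduce the remaining derivatives with \eqref{eq:double-second}, and finish by induction on length, matching against $(\Delta\otimes\Delta)$ applied to \eqref{eq:Psi-rec} for $w$. Your additional checks, that $\Const(\Delta w)=\Delta(\Const w)$ and that $\varphi_{\otimes,F}$ preserves congruences modulo $2$, fill in details the paper leaves implicit.
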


\begin{proof}
Applying \eqref{eq:Psi-rec} twice and using \cref{lem:double-deletions},
we obtain 
\[
\varphi_{\otimes,F}(\Delta w)\equiv\Delta(\Const w)\otimes1+\varphi_{\otimes,F}(\Delta\partial_{1,0}w)e^{2}_{0}+\sum_{a\in F}\varphi_{\otimes,F}(\Delta\partial_{1,a}w)e_{a}e_{0}\pmod 2.
\]
 Induction on word length, together with \eqref{eq:Psi-rec}, proves
\eqref{eq:doubling}.
\end{proof}

\section{The maps \texorpdfstring{$T^{10}$ and $T^{01}$}{T\string^10 and T\string^01}\label{sec:inverse-maps}}
\begin{lem}
\label{lem:intertwine}For distinct labels $\alpha,\beta$, we have
on $\A^{0}_{F}$
\begin{equation}
\varphi_{\otimes,F}\partial_{\alpha,\beta}=(\id\otimes\partial_{\alpha,\beta})\varphi_{\otimes,F}.\label{eq:Psi-D}
\end{equation}
\end{lem}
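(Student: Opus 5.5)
We prove \eqref{eq:Psi-D} by induction on the length of $w$, using the recursion \eqref{eq:Psi-rec}. Write $\Phi\coloneqq\varphi_{\otimes,F}$. For a word $w$ of length $n$, apply \eqref{eq:Psi-rec} to $\partial_{\alpha,\beta}w$, which has length $n-1$:
\[
\Phi(\partial_{\alpha,\beta}w)=\Const(\partial_{\alpha,\beta}w)\otimes1+\Phi(\partial_{1,0}\partial_{\alpha,\beta}w)e_{0}+\sum_{a\in F}\Phi(\partial_{1,a}\partial_{\alpha,\beta}w)e_{a}.
\]
The goal is to compare this with $(\id\otimes\partial_{\alpha,\beta})$ applied to the recursion for $\Phi(w)$. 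To do so we need a rule for how $\partial_{\alpha,\beta}$ acts on a second-factor word $ve_{b}$ ending in a given letter. From \cref{def:deletion} with right boundary label $1$, deleting the last letter $e_{b}$ of $ve_{b}$ contributes $\br{\{b,1\}=\{\alpha,\beta\}}-\br{\{c,b\}=\{\alpha,\beta\}}$, where $c$ is the last letter of $v$. Deleting an interior letter changes its right-neighbour label from $1$ to $b$. Hence
\[
\partial_{\alpha,\beta}(ve_{b})=(\partial_{\alpha,\beta}v)e_{b}+\text{(correction terms)},
\]
where the corrections are of the form $\pm v$ or $\pm v'e_b$ and depend on whether $\{\cdot,b\}$ or $\{\cdot,1\}$ equals $\{\alpha,\beta\}$. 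Concretely, these corrections amount to replacing the endpoint $1$ by $b$ in the boundary convention.

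The key algebraic input is the commutator identities \eqref{eq:comm0} and \eqref{eq:commb}. For $\{\alpha,\beta\}$ disjoint from $\{1,b\}$, \eqref{eq:commb} gives $\partial_{1,b}\partial_{\alpha,\beta}=\partial_{\alpha,\beta}\partial_{1,b}$, and the last-letter rule produces no correction. The induction then closes termwise, provided the constant terms match: $\Const\partial_{\alpha,\beta}w$ should vanish whenever the right-hand side gives no constant contribution. This needs checking, since $\partial_{\alpha,\beta}$ with $\alpha,\beta\in F$ can delete auxiliary letters. When $\{\alpha,\beta\}$ meets $\{1,b\}$, \eqref{eq:comm0} in the form
\[
\partial_{1,b}\partial_{\alpha,\beta}=\partial_{\alpha,\beta}\partial_{1,b}+[\partial_{1,b},\partial_{\alpha,\beta}],
\]
with the commutator expressed as $-[\partial_{\alpha,\beta},\partial_{\cdot,\cdot}]$ of the complementary operator, produces exactly terms of the shape $\Phi(\partial_{1,b'}w)$ with $b'$ swapped between $\alpha$ and $\beta$. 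These should match the correction terms $\pm v\,e_{b'}$ in the last-letter rule. For example, take $\alpha=1$, $\beta=b$. Then $\partial_{1,b}$ itself appears, and summing over $b$ the corrections telescope using $\partial_{1,b}+\partial_{b,c}+\dots$ relations.

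The cleanest way to organize this is to view $\Phi$ as the unique map satisfying \eqref{eq:Psi-rec}. It then suffices to show that both $\Phi\partial_{\alpha,\beta}$ and $(\id\otimes\partial_{\alpha,\beta})\Phi$ satisfy the same recursion, with the same data at the constant level. The main obstacle is the bookkeeping of the cases $1\in\{\alpha,\beta\}$ and $0\in\{\alpha,\beta\}$, with $b\in\{0\}\cup F$. Here the boundary labels $(0,1)$ enter, and we must invoke \eqref{eq:Const-identities} to kill the spurious constant terms. I would handle these by the substitution trick from \cref{lem:double-deletions}: introduce fresh letters so that the labels $\alpha,\beta$ become generic, prove the generic case, and then specialize via the substitution relations.
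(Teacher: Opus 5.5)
Your skeleton (induction on length, splitting by the last letter of the second factor via \eqref{eq:Psi-rec}, commuting $\partial_{1,b}$ past $\partial_{\alpha,\beta}$) is the paper's. But the step that carries the proof is only asserted (``these should match''), and the identity you name would not make it go through.

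For $\alpha,\beta\in\{0\}\cup F$ and $b=\alpha$, your last-letter rule gives, apart from a constant term,
\[
(\id\otimes\partial_{\alpha,\beta})\bigl(\Phi(\partial_{1,\alpha}w)e_{\alpha}\bigr)=\bigl((\id\otimes\partial_{\alpha,\beta})\Phi(\partial_{1,\alpha}w)\bigr)e_{\alpha}+\bigl((\id\otimes\partial_{1,\beta})\Phi(\partial_{1,\alpha}w)\bigr)(e_{\alpha}-e_{\beta}).
\]
To match this you need $[\partial_{1,c},\partial_{\alpha,\beta}]$ written using operators $\partial_{1,\cdot}$ alone. The form you describe, $[\partial_{\alpha,\beta},\partial_{1,\alpha}+\partial_{1,\beta}]=0$, still contains $\partial_{\alpha,\beta}$. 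With it, the induction hypothesis only determines the sum of the $e_{\alpha}$- and $e_{\beta}$-components of $\Phi(\partial_{\alpha,\beta}w)$, not each one.

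The instance you need is the one with triple $(1,\alpha,\beta)$, namely $[\partial_{1,\alpha},\partial_{1,\beta}+\partial_{\alpha,\beta}]=0$. Together with \eqref{eq:commb} it gives
\[
[\partial_{1,c},\partial_{\alpha,\beta}]=(\br{c=\beta}-\br{c=\alpha})[\partial_{1,\alpha},\partial_{1,\beta}]\qquad(c\in\{0\}\cup F).
\]
The right side passes through $\Phi$ because each $\partial_{1,b}$ does; that is the case $1\in\{\alpha,\beta\}$ of the lemma. You propose to get that case by an unspecified telescoping. In fact it is immediate from \eqref{eq:Psi}, since on $\Wcal^{(0)}_{F}$ the operator $\partial_{1,b}$ just strips a terminal $e_{b}$. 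It must be established first, or carried in a joint induction, before the commutator terms can be handled.

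The fresh-letter substitution you suggest for the cases with $0\in\{\alpha,\beta\}$ does not work. In \cref{lem:double-deletions} it was applied to operators only. Here it would have to pass through $\varphi_{\otimes,F}$, which singles out $e_{0}$ through $\Const$ and through the boundary label $a_{0}=0$, and no substitution changes that label. Concretely, $\varphi_{\otimes,F}(e_{1}e_{0})=e_{1}e_{0}\otimes1$ by \eqref{eq:kill-A0}, while $\varphi_{\otimes,F\cup\{0'\}}(e_{1}e_{0'})=1\otimes(e_{0'}e_{0'}-e_{0'}e_{0})$, whose image under $\id\otimes\pi_{0}$ is $0$.

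No such device is needed. The only boundary issue is the empty-word component, where you need
\[
\Const(\partial_{\alpha,\beta}w)=-\br{\alpha=0}\Const(\partial_{1,\beta}w)-\br{\beta=0}\Const(\partial_{1,\alpha}w).
\]
When one label is $0$, this is \eqref{eq:Const-identities}. When $\alpha,\beta\in F$, both sides vanish, because the single deleted auxiliary letter has neighbours in $\{0,1\}$. That is the check you flagged but left open.
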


\begin{proof}
For $b\in\{0\}\cup F$, the operator $\partial_{1,b}$ acts on $\Wcal^{(0)}_{F}$
by removing a terminal $e_{b}$ and gives zero otherwise. Thus \eqref{eq:Psi}
proves the claim when one of the labels is $1$. We will use this
commutativity repeatedly below without further comment.

Now suppose that $\alpha,\beta\in\{0\}\cup F$, and use induction
on the length of a word $w\in\A^{0}_{F}$, the empty case being immediate.
A tensor in $\A^{0}\otimes\Wcal^{(0)}_{F}$ is determined by its component
with empty second factor and its images under $\id\otimes\partial_{1,c}$
for all $c\in\{0\}\cup F$. We compare these components and images.

By definition, the left-hand empty-word component is $\Const(\partial_{\alpha,\beta}w)$.
On the right, $\partial_{\alpha,\beta}$ can produce the empty word
only from $e_{\beta}$ and $e_{\alpha}$, with coefficients $-\br{\alpha=0}$
and $-\br{\beta=0}$, respectively. Thus, the right-hand empty-word
component is
\[
-\br{\alpha=0}\Const(\partial_{1,\beta}w)-\br{\beta=0}\Const(\partial_{1,\alpha}w)=\Const(\partial_{\alpha,\beta}w),
\]
where the equality follows directly from the definition of the operators.

For $c\in\{0\}\cup F$, the commutator identities \eqref{eq:comm0}
and \eqref{eq:commb} give
\begin{equation}
[\partial_{1,c},\partial_{\alpha,\beta}]=(\br{c=\beta}-\br{c=\alpha})[\partial_{1,\alpha},\partial_{1,\beta}].\label{eq:intertwine-commutator}
\end{equation}
We have
\begin{align*}
(\id\otimes\partial_{1,c})\varphi_{\otimes,F}(\partial_{\alpha,\beta}w) & =\varphi_{\otimes,F}(\partial_{\alpha,\beta}\partial_{1,c}w)+\varphi_{\otimes,F}([\partial_{1,c},\partial_{\alpha,\beta}]w)\\
 & =(\id\otimes\partial_{\alpha,\beta}\partial_{1,c})\varphi_{\otimes,F}(w)+(\id\otimes[\partial_{1,c},\partial_{\alpha,\beta}])\varphi_{\otimes,F}(w)\\
 & =(\id\otimes\partial_{1,c}\partial_{\alpha,\beta})\varphi_{\otimes,F}(w),
\end{align*}
where the second equality follows from the induction hypothesis for
the first term and \eqref{eq:intertwine-commutator} for the second
term. This completes the proof.
\end{proof}

Define $\Z$-linear maps $T^{10}:\Wcal^{(1)}_{F}\to\Wcal^{(0)}_{F}$
and $T^{01}:\Wcal^{(0)}_{F}\to\Wcal^{(1)}_{F}$ recursively by $T^{10}(1)\coloneqq1$
and $T^{01}(1)\coloneqq1$, and, for nonempty words $w\in\Wcal^{(1)}_{F}$
and $v\in\Wcal^{(0)}_{F}$, by
\begin{align*}
T^{10}(w) & \coloneqq\sum_{b\in\{0\}\cup F}T^{10}(\partial_{1,b}w)e_{b},\\
T^{01}(v) & \coloneqq-\sum_{a\in\{1\}\cup F}e_{a}T^{01}(\partial_{a,0}v).
\end{align*}
Induction on word length verifies the stated codomains.

Since $\Const(w)=0$ for every nonempty word $w\in\Wcal^{(1)}_{F}$,
\eqref{eq:Psi-rec} and induction give
\begin{equation}
\varphi_{\otimes,F}(w)=1\otimes T^{10}(w)\qquad(w\in\Wcal^{(1)}_{F}).\label{eq:T-def}
\end{equation}

\begin{lem}
\label{lem:T-inverse} 
\[
T^{10}T^{01}=\id_{\Wcal^{(0)}_{F}},\qquad T^{01}T^{10}=\id_{\Wcal^{(1)}_{F}}.
\]
In particular, for $V\in\Wcal^{(0)}_{F}$, 
\begin{equation}
\varphi_{\otimes,F}(T^{01}(V))=1\otimes V.\label{eq:S}
\end{equation}
\end{lem}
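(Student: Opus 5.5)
We prove both identities by induction on word length, comparing the recursive definitions of $T^{10}$ and $T^{01}$. The key tool is a description of how the operators $\partial_{1,b}$ act on $\Wcal^{(1)}_{F}$ and $\partial_{a,0}$ act on $\Wcal^{(0)}_{F}$, together with the commutator identities \eqref{eq:comm0} and \eqref{eq:commb}.

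\textbf{Step 1: the composite $T^{10}T^{01}$.} Take $v\in\Wcal^{(0)}_{F}$ nonempty. By definition,
\[
T^{10}T^{01}(v)=\sum_{b\in\{0\}\cup F}T^{10}\bigl(\partial_{1,b}T^{01}(v)\bigr)e_{b},
\]
so it suffices to prove the intertwining identity
\begin{equation*}
\partial_{1,b}\,T^{01}(v)=T^{01}(\partial_{1,b}v)\qquad(v\in\Wcal^{(0)}_{F},\ b\in\{0\}\cup F).
\end{equation*}
Here $\partial_{1,b}$ acts on $\Wcal^{(0)}_{F}$ by removing a terminal $e_{b}$ (and giving zero otherwise), exactly as noted in the proof of \cref{lem:intertwine}. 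Granting this identity, induction gives
\[
T^{10}T^{01}(v)=\sum_{b}(\partial_{1,b}v)e_{b}=v,
\]
since every nonempty word of $\Wcal^{(0)}_{F}$ ends in some $e_{b}$ with $b\in\{0\}\cup F$.

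\textbf{Step 2: proving the intertwining identity.} We argue by induction on length. Expand
\[
\partial_{1,b}T^{01}(v)=-\sum_{a\in\{1\}\cup F}\partial_{1,b}\bigl(e_{a}T^{01}(\partial_{a,0}v)\bigr).
\]
Deleting the leading $e_{a}$ is governed by the left neighbor $0$ and the right neighbor, which is the first letter of $T^{01}(\cdots)$. All other deletions reproduce $e_{a}\,\partial_{1,b}T^{01}(\partial_{a,0}v)$, and by induction this equals $e_{a}T^{01}(\partial_{1,b}\partial_{a,0}v)$. We then swap $\partial_{1,b}\partial_{a,0}$ using \eqref{eq:comm0} and \eqref{eq:commb}, which produce correction terms of the form $\pm[\partial_{1,a},\partial_{a,0}]$-type expressions when $b\in\{0,a\}$. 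These correction terms must cancel against the boundary terms coming from deleting the leading $e_{a}$.

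We expect this bookkeeping to be the main obstacle. To handle it, we would first try to avoid it altogether via a cleaner route: show instead that $\varphi_{\otimes,F}$ restricted to $\Wcal^{(1)}_{F}$, namely $1\otimes T^{10}$ by \eqref{eq:T-def}, is injective and surjective onto $1\otimes\Wcal^{(0)}_{F}$ in each length. Both modules are free on words of the same length with letters drawn from sets of equal size. Moreover, $T^{10}$ is unitriangular with respect to a suitable order: its leading term reverses the word and swaps $e_{1}\leftrightarrow e_{0}$ up to sign. Hence $T^{10}$ is an isomorphism over $\Z$, and it then suffices to prove one-sided inverse $T^{01}T^{10}=\id$, or equivalently the one-sided identity of Step 1.

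\textbf{Step 3: the second identity and \eqref{eq:S}.} For $T^{01}T^{10}=\id$, the symmetric argument uses the recursion $\partial_{a,0}T^{10}(w)=T^{10}(\partial_{a,0}w)$. This follows from \cref{lem:intertwine} applied to $w\in\Wcal^{(1)}_{F}$, because both sides are second tensor factors of $\varphi_{\otimes,F}$ by \eqref{eq:T-def}, and \eqref{eq:Psi-D} holds for $(\alpha,\beta)=(a,0)$. This gives, by induction,
\[
T^{01}T^{10}(w)=-\sum_{a}e_{a}\,T^{01}T^{10}(\partial_{a,0}w)=-\sum_{a}e_{a}\,\partial_{a,0}w .
\]
The right-hand side equals $w$ because on $\Wcal^{(1)}_{F}$ the operator $\partial_{a,0}$ removes an initial $e_{a}$ with coefficient $-1$ (the left boundary label is $0$). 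This argument is the clean one, since \cref{lem:intertwine} does the work, so we would prove this identity first. Combining it with the bijectivity from Step 2 yields $T^{10}T^{01}=\id$ as well. Finally, \eqref{eq:S} follows by applying \eqref{eq:T-def} to $w=T^{01}(V)$.
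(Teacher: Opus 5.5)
Your Step 3 is exactly the paper's argument for $T^{01}T^{10}=\id_{\Wcal^{(1)}_{F}}$. \Cref{lem:intertwine} and \eqref{eq:T-def} make $T^{10}$ commute with $\partial_{a,0}$ for $a\in\{1\}\cup F$, and then $-\sum_{a}e_{a}\partial_{a,0}w=w$ on $\Wcal^{(1)}_{F}$.

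\textbf{The gap.} The problem is how you get the other composite. You justify bijectivity of $T^{10}$ by a unitriangularity claim, namely that its leading term ``reverses the word and swaps $e_{1}\leftrightarrow e_{0}$ up to sign''. This claim is false once $F$ contains two distinct labels. For distinct $a,c\in F$, the definition gives $\partial_{1,b}(e_{a}e_{c})=\br{b=c}\,e_{a}$ and $T^{10}(e_{a})=e_{a}$. Hence
\[
T^{10}(e_{a}e_{c})=e_{a}e_{c},
\]
which does not contain the reversed word $e_{c}e_{a}$ at all. More generally, $T^{10}$ fixes every word made only of auxiliary letters, because $\partial_{1,b}$ can then delete only the last letter. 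So ``the bijectivity from Step 2'' is not established as written. Since you obtain \eqref{eq:S} from \eqref{eq:T-def} via $T^{10}T^{01}=\id$, it inherits the same gap.

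\textbf{The repair.} You need no triangularity, and you already noted the needed ingredient. Once $T^{01}T^{10}=\id$ is proved, use equal ranks: in each length $n\ge1$, both $\Wcal^{(0)}_{F}$ and $\Wcal^{(1)}_{F}$ are free $\Z$-modules of rank $|F|(|F|+1)^{n-1}$, and both maps preserve length. So the matrices $A$ of $T^{01}$ and $B$ of $T^{10}$ are square integer matrices with $AB=I$. Then $\det A\det B=1$, so both are invertible and $BA=I$, which says $T^{10}T^{01}=\id$. This is precisely how the paper concludes.

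With this, your abandoned Steps 1--2 (proving $\partial_{1,b}T^{01}=T^{01}\partial_{1,b}$ directly) become unnecessary. That identity is true, but it follows afterwards from the two-sided inverse rather than being needed for it.
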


\begin{proof}
By \cref{lem:intertwine} and \eqref{eq:T-def}, $T^{10}$ commutes
with $\partial_{a,0}$ for $a\in\{1\}\cup F$. For a nonempty word
$w\in\Wcal^{(1)}_{F}$, induction on its length gives
\[
T^{01}T^{10}(w)=-\sum_{a\in\{1\}\cup F}e_{a}T^{01}T^{10}(\partial_{a,0}w)=-\sum_{a\in\{1\}\cup F}e_{a}\partial_{a,0}w=w.
\]
The last equality holds because, on $\Wcal^{(1)}_{F}$, $\partial_{a,0}$
removes an initial $e_{a}$ with a minus sign and gives zero otherwise.
The empty case is immediate. Hence $T^{01}T^{10}=\id_{\Wcal^{(1)}_{F}}$.
In each fixed word length, the two spaces are free $\Z$-modules of
equal finite rank. Since both maps preserve length, it follows that
$T^{10}T^{01}=\id_{\Wcal^{(0)}_{F}}$ as well.
\end{proof}

\section{Construction of \texorpdfstring{$R(\protect\kk)$}{R(k)}\label{sec:construction}}

For any finite set $F$ of auxiliary labels, let $\pi:\A_{F}\to\A_{z}$
be the algebra homomorphism defined by $\pi(e_{a})\coloneqq e_{z}$
for $a\in F$, and set $\pi(e_{0})\coloneqq e_{0}$ and $\pi(e_{1})\coloneqq e_{1}$.

Now fix a nonempty admissible index $\kk=(k_{1},\ldots,k_{d})$. Let
$j_{1}<\cdots<j_{r}$ be the positions of the odd entries in $\kk$.
Put 
\[
K_{i}\coloneqq k_{i}+\br{k_{i}\text{ is odd}}\in2\Z_{\geq1},\qquad a_{i}\coloneqq K_{i}/2\in\Z_{\geq1}.
\]
Let $F\coloneqq\{z_{0},\ldots,z_{r}\}$ be a set of distinct auxiliary
labels. With the conventions $j_{0}\coloneqq0$ and $j_{r+1}\coloneqq d$,
define 
\[
c_{i}\coloneqq z_{s}\quad(j_{s}<i\le j_{s+1}),\qquad0\le s\le r.
\]
The last interval may be empty when $j_{r}=d$. If $r=0$, all the
labels $c_{i}$ are $z_{0}$. Set

\[
V(\kk)\coloneqq e_{c_{1}}e^{a_{1}-1}_{0}\cdots e_{c_{d}}e^{a_{d}-1}_{0}\in\Wcal^{(0)}_{F},\qquad\mathscr{D}_{r}\coloneqq\partial_{z_{r-1},0}\cdots\partial_{z_{1},0}\partial_{z_{0},0}.
\]
For $r=0$, take $\mathscr{D}_{0}\coloneqq\id$. For example, 
\[
V(8,5,8,10)=e_{z_{0}}e^{3}_{0}\,e_{z_{0}}e^{2}_{0}\,e_{z_{1}}e^{3}_{0}\,e_{z_{1}}e^{4}_{0}.
\]
Finally, define
\[
R(\kk)\coloneqq\pi\mathscr{D}_{r}\Delta T^{01}V(\kk)\in\A^{0}_{z}.
\]

\section{Proof of \texorpdfstring{\cref{prop:two-term-estimates}}{Proposition \getrefnumber{prop:two-term-estimates}}
(i)\label{sec:proof-first-term}}
\begin{lem}
\label{lem:excess}Let $\kk=(k_{1},\ldots,k_{d})$ be an index with
$k_{i}\ge2$ for all $i$. Put $s\coloneqq|\kk|-2\depth(\kk)\in\mathbb{Z}_{\geq0}$.
Then 
\[
\word(\kk)\in\Hcal_{s,|\kk|}+\mathcal{F}_{s-2,|\kk|}.
\]
\end{lem}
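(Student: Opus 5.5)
The plan is a parity count. It needs no operator identities from the earlier sections; only the definitions of $\Hcal_{r,N}$ and $\mathcal{F}_{r,N}$ are used. Put $N\coloneqq|\kk|$ and $d\coloneqq\depth(\kk)$. Then
\[
s=\sum_{i=1}^{d}(k_{i}-2).
\]
Each summand is nonnegative since $k_{i}\ge2$, so $s\ge0$.

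The first step relates $s$ to $\odd(\kk)$.
\begin{itemize}
\item Every odd $k_{i}\ge2$ satisfies $k_{i}\ge3$, so it contributes $k_{i}-2\ge1$ to the sum.
\item Every even $k_{i}$ contributes $k_{i}-2\ge0$.
\end{itemize}
Hence $\odd(\kk)\le s$. Moreover $k_{i}-2\equiv k_{i}\pmod 2$ for each $i$, so
\[
s\equiv\sum_{i}k_{i}\equiv\odd(\kk)\pmod 2.
\]
Therefore either $\odd(\kk)=s$ or $\odd(\kk)\le s-2$.

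The second step splits into these two cases.
\begin{itemize}
\item If $\odd(\kk)\le s-2$, then $\kk\in\Ibb_{N}$ (it is admissible because $k_{d}\ge2$), so $\word(\kk)\in\mathcal{F}_{s-2,N}$ by definition.
\item If $\odd(\kk)=s$, every inequality in the first step must be an equality. So each odd entry satisfies $k_{i}-2=1$, i.e.\ $k_{i}=3$, and each even entry satisfies $k_{i}-2=0$, i.e.\ $k_{i}=2$. Thus $\kk$ is a Hoffman index of weight $N$ with $\odd(\kk)=s$, and $\word(\kk)\in\Hcal_{s,N}$.
\end{itemize}

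In either case $\word(\kk)\in\Hcal_{s,N}+\mathcal{F}_{s-2,N}$. For small $s$ the module $\mathcal{F}_{s-2,N}$ is $0$, but then only the second case can occur, because $\odd(\kk)\ge0>s-2$.

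There is no real obstacle. The only point needing care is the parity observation, which forces the dichotomy $\odd(\kk)=s$ or $\odd(\kk)\le s-2$ and rules out $\odd(\kk)=s-1$.
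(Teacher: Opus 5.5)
Your proof is correct and matches the paper's argument: the paper likewise notes that $\odd(\kk)\le s$, with equality exactly for Hoffman indices, and that $s-\odd(\kk)$ is even, and then splits into the Hoffman and non-Hoffman cases. You split on $\odd(\kk)=s$ versus $\odd(\kk)\le s-2$ instead; this is equivalent and uses only the direction ``equality implies Hoffman''.
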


\begin{proof}
By assumption, $\odd(\kk)\leq s$, with equality if and only if $\kk$
is a Hoffman index. Since $s-\odd(\kk)$ is even, $\word(\kk)\in\mathcal{F}_{s-2,|\kk|}$
if $\kk$ is not a Hoffman index, and $\word(\kk)\in\Hcal_{s,|\kk|}$
otherwise.
\end{proof}

We now prove part (i) of \cref{prop:two-term-estimates}:
\[
\tau_{\infty}(\lambda(R(\kk)))\in\Hcal_{r,N}+\mathcal{F}_{r-2,N}
\]
where $r\coloneqq\odd(\kk)$ and $N\coloneqq|\kk|$. Recall that $R(\kk)=\pi\mathscr{D}_{r}\Delta T^{01}V(\kk)$.
\begin{proof}[Proof of \cref{prop:two-term-estimates} (i)]
Put $n\coloneqq|V(\kk)|=(N+r)/2$. Every word in $\Delta T^{01}V(\kk)$
has the form
\begin{equation}
e_{b_{1}}e_{0}\,e_{b_{2}}e_{0}\cdots e_{b_{n}}e_{0},\qquad b_{i}\in\{1\}\cup F.\label{eq:alternating}
\end{equation}
Deleting a nonzero letter between two zeros has coefficient $1-1=0$.
Thus every word $w$ occurring in $R(\kk)$ contains no $e_{0}e_{0}$
and has at least $n-r\geq1$ occurrences of $e_{0}$.

The operators in $\mathscr{D}_{r}$ do not delete $e_{1}$. If an
input word of the form \eqref{eq:alternating} contains $e_{1}$,
its suffix after the last $e_{1}$ has $m$ auxiliary letters and
$m+1$ zeros. Only operators indexed by these auxiliary labels can
delete letters in this suffix, and each is used at most once, so at
most $m$ letters are deleted and a zero remains. Thus $w$ has only
$e_{z}$'s after its last $e_{0}$. Shuffle regularization moves these
letters before the last $e_{0}$, preserving the number of zeros and
creating no $e_{0}e_{0}$. By \eqref{eq:lambda-B}, $\lambda(w)$
is an integral linear combination of admissible words with these properties.

Since $\tau_{\infty}$ interchanges $e_{0}$ and $e_{1}$ up to sign,
every index $\avec$ occurring in $\tau_{\infty}(\lambda(w))$ has
all entries at least $2$ and satisfies $\depth(\avec)\ge n-r$. Hence
\[
|\avec|-2\depth(\avec)\le(2n-r)-2(n-r)=r.
\]
By \Cref{lem:excess}, $\word(\avec)\in\Hcal_{r,N}+\mathcal{F}_{r-2,N}$,
which completes the proof.
\end{proof}

\section{Proof of \texorpdfstring{\cref{prop:two-term-estimates}}{Proposition \getrefnumber{prop:two-term-estimates}}
(ii)\label{sec:proof-second-term}}

For any finite set $F$ of auxiliary labels, we have 
\begin{equation}
\varphi_{\otimes}\pi=(\id\otimes\pi)\varphi_{\otimes,F}.\label{eq:collapse}
\end{equation}
Indeed, $\Const\pi=\Const$, $\partial_{1,0}\pi=\pi\partial_{1,0}$,
and $\partial_{1,z}\pi=\pi\sum_{a\in F}\partial_{1,a}$, so the sums
over sequences of derivatives in \eqref{eq:Psi} correspond term by
term.
\begin{lem}
\label{prop:colored} Let $V\in\Wcal^{(0)}_{F}$, and put $S\coloneqq T^{01}(V)\in\Wcal^{(1)}_{F}$
as in \eqref{eq:S}. For any $a_{1},\ldots,a_{s}\in F$, 
\[
\varphi_{\sh}\!\left(\pi\partial_{a_{1},0}\cdots\partial_{a_{s},0}\Delta S\right)\equiv\pi\partial_{a_{1},0}\cdots\partial_{a_{s},0}\Delta V\pmod{2\A^{0}_{z}}.
\]
\end{lem}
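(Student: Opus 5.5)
We chain together the facts already established: \eqref{eq:collapse}, \cref{lem:intertwine}, \cref{lem:doubling}, and \eqref{eq:S}.

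\emph{Step 1: remove $\pi$.} Since $\varphi_{\sh}=\operatorname{mult}_{\sh}\circ\varphi_{\otimes}$, \eqref{eq:collapse} gives
\[
\varphi_{\sh}(\pi X)=\operatorname{mult}_{\sh}(\id\otimes\pi)\varphi_{\otimes,F}(X),\qquad X\coloneqq\partial_{a_{1},0}\cdots\partial_{a_{s},0}\Delta S .
\]
Note that $\Delta S\in\A^{0}_{F}$ because $S\in\Wcal^{(1)}_{F}$ and $\Delta$ appends $e_{0}$ to each letter. By \cref{lem:calculus}, $X\in\A^{0}_{F}$ as well.

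\emph{Step 2: pull out the differential operators.} Apply \cref{lem:intertwine} $s$ times to get
\[
\varphi_{\otimes,F}(X)=(\id\otimes\partial_{a_{1},0}\cdots\partial_{a_{s},0})\varphi_{\otimes,F}(\Delta S).
\]
This is an exact identity, so it also holds modulo $2$.

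\emph{Step 3: handle the doubling.} By \cref{lem:doubling} and then \eqref{eq:S},
\[
\varphi_{\otimes,F}(\Delta S)\equiv(\Delta\otimes\Delta)\varphi_{\otimes,F}(S)=(\Delta\otimes\Delta)(1\otimes V)=1\otimes\Delta V\pmod 2 .
\]
All maps involved ($\id\otimes\partial$, $\id\otimes\pi$, $\operatorname{mult}_{\sh}$) are $\Z$-linear, so they preserve congruences modulo $2$.

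\emph{Step 4: combine.} Putting the steps together,
\[
\varphi_{\sh}(\pi X)\equiv\operatorname{mult}_{\sh}\bigl(1\otimes\pi\partial_{a_{1},0}\cdots\partial_{a_{s},0}\Delta V\bigr)=\pi\partial_{a_{1},0}\cdots\partial_{a_{s},0}\Delta V .
\]
The equality holds because $1\sh w=w$.

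It remains to confirm that the congruence holds modulo $2\A^{0}_{z}$ and not merely modulo $2\A_{z}$. The left side lies in $\A^{0}_{z}$, being in the image of $\varphi_{\sh}$. The right side lies in $\A^{0}_{z}$: $\Delta V\in\A^{0}_{F}$ since $V\in\Wcal^{(0)}_{F}$ starts with an auxiliary letter, \cref{lem:calculus} keeps it in $\A^{0}_{F}$, and $\pi$ maps $\A^{0}_{F}$ into $\A^{0}_{z}$. Since $\A^{0}_{z}$ is spanned by a subset of the word basis, $\A^{0}_{z}\cap2\A_{z}=2\A^{0}_{z}$, so the two notions of congruence agree.

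The main point needing care is Step 3. There we must check that \cref{lem:doubling} applies to $S$, which requires $S\in\A^{0}_{F}$; this holds because $\Wcal^{(1)}_{F}\subset\A^{0}_{F}$. We must also check that the domains in \cref{lem:intertwine} match, i.e.\ that $\partial_{a,0}$ preserves $\Wcal^{(0)}_{F}$ in the second tensor factor. The second factor $\Delta V$ is not in $\Wcal^{(0)}_{F}$, but \cref{lem:intertwine} is used only as a statement about $\varphi_{\otimes,F}$, whose values lie in $\A^{0}\otimes\Wcal^{(0)}_{F}$. That is consistent, since each application takes place on a genuine value of $\varphi_{\otimes,F}$.
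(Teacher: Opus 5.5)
Your proof is correct and follows the paper's argument: it chains \eqref{eq:collapse}, \cref{lem:intertwine}, \cref{lem:doubling}, and \eqref{eq:S}, then applies $\operatorname{mult}_{\sh}$, and your extra bookkeeping (domains, $\A^{0}_{z}\cap2\A_{z}=2\A^{0}_{z}$) only makes explicit what the paper leaves implicit. One small slip: $\Delta V$ \emph{does} lie in $\Wcal^{(0)}_{F}$, because $\Delta$ introduces no $e_{1}$ and keeps the initial auxiliary letter of each nonempty word, so the worry in your final paragraph is unnecessary, though the argument is unaffected.
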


\begin{proof}
By \eqref{eq:collapse}, \eqref{eq:Psi-D}, \cref{lem:doubling},
and \eqref{eq:S}, we have 
\[
\varphi_{\otimes}\pi\partial_{a_{1},0}\cdots\partial_{a_{s},0}\Delta S\equiv1\otimes\pi\partial_{a_{1},0}\cdots\partial_{a_{s},0}\Delta V\pmod 2.
\]
Applying the shuffle multiplication map gives the claim.
\end{proof}

Fix a nonempty admissible index $\kk=(k_{1},\ldots,k_{d})$, and use
the notation of \cref{sec:construction}. For any index $\bb=(b_{1},\ldots,b_{m})$,
write
\[
\word_{z}(\bb)\coloneqq e_{z}e^{b_{1}-1}_{0}\cdots e_{z}e^{b_{m}-1}_{0}.
\]

\begin{lem}
\label{lem:extraction} Let $N\coloneqq|\kk|$. Then
\[
\pi\mathscr{D}_{r}\Delta V(\kk)\equiv(-1)^{r}\word_{z}(\kk)\pmod{\mathcal{G}_{r-2,N}}
\]
where $\mathcal{G}_{r-2,N}$ is the $\Z$-submodule generated by the
words $\word_{z}(\bb)$ for all admissible indices $\bb$ of weight
$N$ with $\odd(\bb)\le r-2$.
\end{lem}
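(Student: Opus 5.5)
The plan is to compute $\pi\mathscr{D}_{r}\Delta V(\kk)$ directly, working with words whose letters carry labels in $\{1\}\cup F$ and recording each word by its sequence of run lengths. Since $\Delta(e_{c}e^{a-1}_{0})=e_{c}e^{2a-1}_{0}$, we get
\[
\Delta V(\kk)=e_{c_{1}}e^{K_{1}-1}_{0}\cdots e_{c_{d}}e^{K_{d}-1}_{0}.
\]
So $\pi\Delta V(\kk)=\word_{z}(K_{1},\ldots,K_{d})$, which has weight $N+r$ and all entries even. We then apply $\partial_{z_{0},0},\ldots,\partial_{z_{r-1},0}$ in turn. Throughout, every word has the shape $e_{c'_{1}}e^{m_{1}-1}_{0}\cdots e_{c'_{\ell}}e^{m_{\ell}-1}_{0}$, with $m_{i}\ge1$ and labels $c'_{i}$ weakly increasing in the block order $z_{0},z_{1},\ldots$. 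We identify such a word with the index $(m_{1},\ldots,m_{\ell})$; under $\pi$ it becomes $\word_{z}(m_{1},\ldots,m_{\ell})$.

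First I classify the nonzero deletions of $\partial_{z_{s},0}$ on such a word. I group deletions of zeros by runs, since deleting any zero of a run gives the same word. The coefficients then telescope, and the run following $e_{c'_{i}}$ gets total coefficient $\br{c'_{i+1}=z_{s}}-\br{c'_{i}=z_{s}}$, with $c'_{\ell+1}$ interpreted as the boundary label $1$. By the block structure this is nonzero only for the last letter labelled $z_{s}$, where it equals $-1$. The effect is to lower that entry $m_{i}$ by one.

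Next I treat deletions of letters. Deleting a letter $e_{z_{s}}$ has nonzero coefficient only if exactly one of its neighbours is $0$. Its right neighbour is always $0$ unless $m_{i}=1$. So a nonzero letter deletion forces the preceding entry to equal $1$, and the effect is to merge entries $(1,y)$ into $y$. For $\partial_{z_{s},0}$ no other letters can be deleted. Both kinds of step keep the shape above, so admissibility and the block order persist; I will also check that the final words end in $e_{0}$, so that they lie in $\mathcal{G}$.

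Parity bookkeeping then finishes the argument. Lowering an even entry by one raises $\odd$ by $1$, and lowering an odd entry lowers it by $1$. A merge $(1,y)\mapsto y$ always lowers $\odd$ by $1$. We start with $\odd=0$ and make exactly $r$ steps, so every final index $\bb$ has weight $N$ and $\odd(\bb)\le r$. Equality holds only if every step lowers an even entry.

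It remains to show that this all-even path is exactly the main term. At step $s$ the zero deletion acts on the last $z_{s}$-labelled entry, which is $K_{j_{s+1}}$ since $c_{i}=z_{s}$ for $j_{s}<i\le j_{s+1}$. This entry is untouched by earlier steps, hence even, and lowering it gives $k_{j_{s+1}}$. So the path where only zero deletions occur contributes $(-1)^{r}\word_{z}(\kk)$. Every other term involves a merge or lowers an odd entry, so it has $\odd(\bb)<r$. Since $\odd(\bb)\equiv N\equiv r\pmod 2$, this gives $\odd(\bb)\le r-2$, which proves the congruence.

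The main obstacle I expect is the invariance claim in the first step: that the only nonzero deletions of each $\partial_{z_{s},0}$ are the run deletion at the end of the $z_{s}$-block and letter deletions following an entry equal to $1$. This has to hold even after earlier merges have shrunk blocks, possibly to empty $z_{s}$-blocks, and with the boundary labels $0$ and $1$ correctly accounted for. I would verify it by a short case check on the neighbour pairs, as in the proof of \cref{lem:calculus}.
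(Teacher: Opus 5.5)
Your route is essentially the paper's: track the number of odd entries, which changes by $\pm1$ under each deletion, and isolate the single path through the words $U_s$ obtained from $\Delta V(\kk)$ by lowering $K_{j_1},\dots,K_{j_s}$ by one. However, the classification of zero deletions that your last step relies on is false.

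Your own formula $\br{c'_{i+1}=z_s}-\br{c'_i=z_s}$ gives coefficient $+1$, not $0$, to the run ending at the \emph{first} letter $e_{z_s}$, whenever $s\ge1$ and that run is nonempty. So $\partial_{z_s,0}$ also lowers the entry just before the $z_s$-block. For example, for $\kk=(3,3)$,
\[
\pi\mathscr{D}_{2}\Delta V(3,3)=\pi\partial_{z_{1},0}\bigl(-e_{z_{0}}e_{0}^{2}e_{z_{1}}e_{0}^{3}\bigr)=\word_{z}(3,3)-\word_{z}(2,4),
\]
and the term $\word_{z}(2,4)$ comes exactly from this deletion. Hence ``every other term involves a merge or lowers an odd entry'' does not follow from what you wrote, and the case check you planned would contradict your invariance claim rather than confirm it.

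There is a smaller gap in the letter case. Your argument that a nonzero letter deletion needs the preceding entry to be $1$ skips the case where the deleted $e_{z_s}$ itself heads an entry equal to $1$ and has $e_0$ on its left. That case is vacuous only because the $z_s$-block is untouched before step $s$, so its entries are still $K_i\ge2$.

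The repair is short and is what the paper does. Every deletion with nonzero coefficient changes $\odd$ by exactly $\pm1$: it lowers one entry by one, replaces adjacent entries $x,y$ by $x+y-1$, or removes an initial entry $1$. So $\odd(\bb)\le r$ and $\odd(\bb)\equiv r\pmod 2$ need no classification at all.

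For the equality case, show by induction that an $\odd$-raising path sits at $U_s$ before step $s$, and classify deletions only on $U_s$. The nonzero ones are:
\begin{itemize}
\item lowering $K_{j_{s+1}}$, with coefficient $-1$, which gives $U_{s+1}$;
\item lowering the preceding entry $k_{j_s}$ if $k_{j_s}\ge2$, with coefficient $+1$;
\item deleting the first $e_{z_s}$ if $k_{j_s}=1$, with coefficient $+1$.
\end{itemize}
The last two lower $\odd$ because $k_{j_s}$ is odd. This is the paper's congruence $\partial_{z_s,0}U_s\equiv-U_{s+1}\pmod{G_{s-1}}$, where $G_m$ is spanned by words with $\odd\le m$. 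Combined with the crude bound $\partial_{z_i,0}G_m\subset G_{m+1}$, it avoids analysing deletions on arbitrary reachable words, which is where your version went wrong. Your remaining checks (weight $N$, final words ending in $e_0$) go through unchanged.
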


\begin{proof}
For a word $w\in\Wcal^{(0)}_{F}$ with $\pi(w)=\word_{z}(\bb)$, let
$\odd(w)$ denote the number of odd entries of $\bb$, which need
not be admissible; set $\odd(1)\coloneqq0$. For $m\in\Z$, let $G_{m}\subset\Wcal^{(0)}_{F}$
be the submodule generated by words $w$ with $\odd(w)\le m$. Since
each deletion increases the number of odd entries by at most one,
\[
\partial_{z_{i},0}(G_{s})\subset G_{s+1}.
\]

Let $U_{m}$ ($m=0,\ldots,r$) be the word obtained from
\[
\Delta V(\kk)=e_{c_{1}}e^{K_{1}-1}_{0}\cdots e_{c_{d}}e^{K_{d}-1}_{0}
\]
by decreasing $K_{j_{1}},\ldots,K_{j_{m}}$ by one. Then $\odd(U_{m})=m$.
In $\partial_{z_{m},0}(U_{m})$, the contributions from deleting $e_{0}$
between consecutive occurrences of $e_{z_{m}}$ cancel, leaving $-U_{m+1}$
as the only term outside $G_{m-1}$:
\[
\partial_{z_{m},0}(U_{m})\equiv-U_{m+1}\pmod{G_{m-1}}
\]
for $0\leq m<r$. Applying the operators successively gives
\[
\mathscr{D}_{r}\Delta V(\kk)=\mathscr{D}_{r}U_{0}\equiv(-1)^{r}U_{r}\pmod{G_{r-2}}.
\]

All resulting words in $\mathscr{D}_{r}\Delta V(\kk)$ begin with
an auxiliary letter and have length $N$. If $k_{d}$ is even, the
terminal block is untouched. If $k_{d}$ is odd, the terminal zero
run initially has length $k_{d}\ge3$ and can be shortened only by
the last operator, which deletes at most one zero. Thus every resulting
index is admissible. Applying $\pi$ now gives
\[
\pi\mathscr{D}_{r}\Delta V(\kk)\equiv(-1)^{r}\pi U_{r}=(-1)^{r}\word_{z}(\kk)\pmod{\mathcal{G}_{r-2,N}}.\qedhere
\]
\end{proof}

\begin{proof}[Proof of \cref{prop:two-term-estimates} (ii)]
Put $N\coloneqq|\kk|$. \Cref{prop:colored} gives
\[
\varphi_{\sh}\bigl(R(\kk)\bigr)\equiv\pi\mathscr{D}_{r}\Delta V(\kk)\pmod{2\A^{0}_{z}}.
\]
Applying the map $\lambda$ and using \cref{lem:extraction} and \eqref{eq:lambda-B},
we obtain
\[
\lambda\bigl(\varphi_{\sh}(R(\kk))\bigr)\equiv(-1)^{r}\word(\kk)\pmod{2\A^{0}_{N}+\mathcal{F}_{r-2,N}}.
\]
Since $(-1)^{r}\equiv1\pmod 2$, part (ii) follows.
\end{proof}

\subsection*{Acknowledgements}

The author thanks Shin-ichiro Seki for helpful comments on the manuscript.
This work was supported by JSPS KAKENHI Grant Number JP22K03244.

\subsection*{Use of AI}

The author used ChatGPT (GPT-6 Astra Pro) for mathematical discussions
and assistance with the exposition and English prose of this manuscript.
All mathematical arguments were independently verified by the author,
who reviewed the final manuscript and takes full responsibility for
its content.

\end{document}